\setlist{nolistsep}
\newcommand{\I}{\mathcal{I}}
\newcommand{\SG}{\mathsf{SG}}
\newcommand{\SC}{\mathsf{SC}}
\definecolor {cornellrot} {RGB} {179,27,27}
\newtheoremstyle{graz}{10pt}{5pt}{\normalfont\it}{}{\normalfont\bfseries}{}{6pt}{\textbf{\thmname{#1}\thmnumber{ #2}\thmnote{ (#3)}}}
\theoremstyle{graz}
\newtheorem{rem}{Remark}[section]
\newtheorem{lem}[rem]{Lemma}
\newtheorem{remark}[rem]{Remark}
\newtheorem{prop}[rem]{Proposition}
\newtheorem{defn}[rem]{Definition}
\newtheorem{thm}[rem]{Theorem}
  \DeclareSymbolFont{AMSb}{U}{msb}{m}{n}
  \DeclareSymbolFontAlphabet{\mathbb}{AMSb}}
\begin{document}
\allowdisplaybreaks

\title{Internal DLA on Sierpinski gasket graphs}
\author{Joe P.\@ Chen, Wilfried Huss, Ecaterina Sava-Huss, Alexander Teplyaev}
\date{\today}
\maketitle

\begin{abstract}
Internal diffusion-limited aggregation (IDLA) is a stochastic growth model on a graph $G$ which describes the formation of a random set of vertices growing from the origin (some fixed vertex) of $G$. Particles start at the origin and perform simple random walks; each particle moves until it lands on a site which was not previously visited by other particles. This random set of occupied sites in $G$ is called the IDLA cluster.
In this paper we consider IDLA on Sierpinski gasket graphs, and show that the IDLA cluster fills balls (in the graph metric) with probability 1.
\end{abstract}

\textit{2010 Mathematics Subject Classification.} 
31C05, 
60G50, 
60J10, 
82C24, 
28A80. 

\textit{Key words and phrases.} 
Internal DLA, Sierpinski gasket graph, Green function, random walk, limit shape, Dirichlet problem, pre-fractal. 
\section{Introduction}\label{sec:intro}

The \emph{internal diffusion limited aggregation} model (shortly \emph{IDLA} or \emph{internal DLA}) is a stochastic growth model introduced by 
\textsc{Diaconis and Fulton} in \cite{diaconis_fulton_1991} as an \emph{internal counterpart} of the \emph{external DLA}, model introduced in physics \cite{dla}. For a survey on external and internal DLA, see \cite{survey-dla}. To formally define the process, let $G$ be an infinite connected graph with a distinguished vertex $o$ which will be called the origin.
Then IDLA on $G$ is defined as follows. For $i=1,2,\ldots$, let $\left(X^i\left(t\right)\right)_{t\geq 0}$
be a sequence of iid simple random walks on $G$ starting at $o$, where $X^i(t)$ represents the random position of the $i$th random walk
at time $t$. The IDLA cluster is built  up one site at a time, by letting the
$i$th particle walk until it exits the set of sites already occupied by the previous $i-1$
particles. Denote by $\mathcal{I}(i)$ the IDLA cluster with $i$ particles.
Set $\mathcal{I}(0)=\{o\}$, and for $i\geq 1$ define the sequence of stopping times
$(\sigma^i)_{i\geq 0}$, with $\sigma^0=o$ and
\begin{equation*}
\sigma^i=\inf\big\{t>0:X^i(t)\notin \mathcal{I}(i-1)\big\}.
\end{equation*}
The IDLA cluster with $i$ particles is defined inductively as
\begin{equation}\label{eq:idla-cluster}
\mathcal{I}(i)=\mathcal{I}(i-1)\cup \{X^i(\sigma^i)\}.
\end{equation}
For $i$ large, we are interested in the shape of the IDLA cluster $\mathcal{I}(i)$
after the $i$th particle stops. Does the random set $\mathcal{I}(i)$ exhibit a regular shape once $i$ is large enough? On $\mathbb{Z}^d$ {\sc Lawler, Bramson and Griffeath} \cite{lawler_bramson_griffeath} were the first to identify the limit shape as an Euclidean ball. If instead of simple random walks on $\mathbb{Z}^d$ one takes drifted random walks, then the limiting shape is shown to be a true heat ball in \cite{lucas-idla-drift};
the proof is based on an unfair divisible sandpile model. On other state spaces, there are several results concerning the IDLA limit shape: on discrete groups with exponential growth \cite{blachere-brofferio-idla}, on non-amenable graphs  \cite{huss-nonamenable}, on supercritical percolation clusters on $\mathbb{Z}^d$ \cite{shellef-percolation,idla-outerbd-dum-cop}, and on comb lattices \cite{idla-comb-huss-sava, asselah-comb}.
For a survey of these results see \cite{survey-dla}.

In this work we investigate IDLA on the pre-fractal Sierpinski gasket graph, specifically the doubly infinite Sierpinski gasket graph $\SG$, shown in Figure \ref{fig:gasket}. 
To construct $\SG$, we consider in $\mathbb{R}^2$ the sets
\begin{equation*}
V_0=\{(0,0), (1,0), (1/2,\sqrt{3}/2)\}
\end{equation*}
and 
\begin{equation*}
E_0=\left\{\big((0,0),(1,0)\big),\big((0,0),(1/2,\sqrt{3}/2)\big),\big((1,0),(1/2,\sqrt{3}/2)\big)\right\}.
\end{equation*}
Now recursively define $(V_1,E_1), (V_2,E_2),\ldots$ by
\begin{equation*}
V_{n+1}=V_n\cup\left\{\big(2^n,0\big)+V_n\right\}\bigcup \left\{\left(2^{n-1},2^{n-1}\sqrt{3}\right)+V_n\right\}
\end{equation*}
and 
\begin{equation*}
E_{n+1}=E_n\cup\left\{\left(2^n,0\right)+E_n\right\}\bigcup \left\{\left(2^{n-1},2^{n-1}\sqrt{3}\right)+E_n\right\},
\end{equation*}
where $(x,y)+S:=\{(x,y)+s:s\in S\}$. Let $V_{\infty}=\bigcup_{n=0}^{\infty}V_n$, $E_{\infty}=\bigcup_{n=0}^{\infty}E_n$, $V=V_{\infty}\cup \{-V_{\infty}\}$ and $E=E_{\infty}\cup \{-E_{\infty}\}$.
Then the doubly infinite Sierpinski gasket graph $\SG$
is the graph with vertex set $V$ and edge set $E$.
Set the origin $o=(0,0)$.

\begin{figure}
	\centering
	\input{gasket.tex}
	\caption{\label{fig:gasket} Doubly-infinite Sierpinski gasket graph.}
\end{figure}

Our main result is the following spherical shape theorem for the IDLA cluster on $\SG$, consisting of random walks launched successively from $o$.
Denote by $B_o(n)$ the ball of radius $n$ and center $o$ in the graph distance of $\SG$, and by $b_n:=\mid B_o(n) \mid$ its cardinality.
\begin{thm}[Shape theorem for internal DLA]\label{thm:idla-sierpinski-gasket}
On $\SG$, the IDLA cluster of $b_n$ particles occupies a set of sites close to
a ball of radius $n$. That is, for all $\epsilon >0 $, we have with probability 1
\begin{equation*}
B_o\left(n(1-\epsilon)\right)\subset \mathcal{I}(b_n)\subset B_o\left(n(1+\epsilon)\right), \text{ for all n sufficiently large}.
\end{equation*}

\end{thm}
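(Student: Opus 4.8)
I would prove the two inclusions separately, following the scheme of Lawler, Bramson and Griffeath \cite{lawler_bramson_griffeath}. Fix $\epsilon>0$. Since $\I(\cdot)$ is increasing, it suffices to show that
\begin{equation*}
\sum_{n\ge1}\mathbb{P}\big(B_o(n(1-\epsilon))\not\subseteq\I(b_n)\big)<\infty
\qquad\text{and}\qquad
\sum_{n\ge1}\mathbb{P}\big(\I(b_n)\not\subseteq B_o(n(1+\epsilon))\big)<\infty,
\end{equation*}
after which the Borel--Cantelli lemma gives the theorem. The object that organises both bounds is the explicit function $u_n(x):=b_n\,g_{B_o(n)}(o,x)-\mathbb{E}_x[\tau_{B_o(n)}]$, where $g_D$ is the Green function of the walk killed on leaving $D$ and $\tau_D$ its exit time; to leading order $u_n$ is the odometer of the divisible sandpile started with mass $b_n$ at $o$, and it is exactly the mean gap in the Lawler--Bramson--Griffeath comparison (here it is convenient that $\SG$ is $4$-regular, so counting measure is reversible and $\sum_{w\in B_o(n)}g_{B_o(n)}(w,z)=\mathbb{E}_z[\tau_{B_o(n)}]$). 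Below $d_f=\log3/\log2$ is the fractal dimension of $\SG$ (so $b_n$ is of order $n^{d_f}$) and $d_w=\log5/\log2$ its walk dimension.

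\textbf{Potential theory on $\SG$.} The proof needs a package of random-walk estimates for $\SG$, which I would obtain by combining the sub-Gaussian heat-kernel bounds on $\SG$ with \emph{exact} computations on the pre-fractal graphs $(V_k,E_k)$: neighbouring cells of $\SG$ are joined at a single cut vertex, so harmonic functions and hitting/exit distributions on the cells satisfy closed linear recursions, which yields sharp asymptotics rather than merely two-sided bounds. Specifically I need: (i) volume regularity, with $|B_o(\lambda r)|/|B_o(r)|$ bounded away from $1$ from above for $\lambda<1$ and from below for $\lambda>1$, uniformly for large $r$, and continuous at $\lambda=1$; (ii) sharp asymptotics $g_{B_o(r)}(o,x)\sim r^{d_w-d_f}\Gamma(|x|/r)$ and $\mathbb{E}_x[\tau_{B_o(r)}]\sim r^{d_w}\Theta(|x|/r)$ for radial profile functions $\Gamma,\Theta$ on $[0,1]$; and (iii) effective-resistance and harmonic-measure estimates between cells, from which one controls the harmonic measure of a given cell as seen from $o$ inside a larger region. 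Establishing (ii) and (iii) with the needed precision is the analytic core of the paper, and is where the Green function, the Dirichlet problem and the pre-fractal structure of the title carry the argument.

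\textbf{Inner bound.} The crux is the deterministic positivity estimate
\begin{equation*}
u_n(x)\ \ge\ c(\epsilon)\,n^{d_w}\qquad\text{for all }x\text{ with }|x|\le n(1-\epsilon),
\end{equation*}
for some $c(\epsilon)>0$; equivalently, that the radial profile $A\Gamma(t)-B\Theta(t)$ — with $A,B$ fixed by the volume and Green-function constants — is strictly positive on $[0,1)$. (On $\mathbb{Z}^2$ the analogous bracket is $t\mapsto-2\log t-1+t^2$, which is positive on $(0,1)$; on $\SG$ one must verify the analogue from the sharp asymptotics of (ii), and the slack $c(\epsilon)$ is exactly what the strict inequality $1-\epsilon<1$ buys, since the profile has only a zero at $t=1$.) To transfer this to IDLA, fix $z$ with $|z|\le n(1-\epsilon)$ and use the single i.i.d.\ walk $X^i$ both for IDLA (up to $\sigma^i$) and continued to its first exit $\xi^i$ of $B_o(n)$. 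On $\{z\notin\I(b_n)\}$ no IDLA path visits $z$, so the total occupation of $z$ by the $b_n$ independent $B_o(n)$-stopped walks, $N(z):=\sum_{i\le b_n}\#\{t<\xi^i:X^i(t)=z\}$, equals the occupation of $z$ accumulated by those walks \emph{after} their IDLA stop, whose conditional mean is at most $\mathbb{E}_z[\tau_{B_o(n)}]$; since $\mathbb{E}[N(z)]=b_n g_{B_o(n)}(o,z)=\mathbb{E}_z[\tau_{B_o(n)}]+u_n(z)$, the event forces either $N(z)$ below its mean by $\tfrac12c(\epsilon)n^{d_w}$ or the overflow above its mean by the same amount. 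Each is a large deviation of a sum of order $b_n$ terms with exponential moments, hence of probability at most $e^{-c'n^{d_f}}$, and the union bound over the at most $b_n$ sites $z$ keeps the total summable. Running $\epsilon\downarrow0$ along a sequence and using $b_n=|B_o(n)|$ moreover shows that a.s.\ the cluster misses only $o(b_n)$ sites of $B_o(n)$.

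\textbf{Outer bound and the main obstacle.} Having established the inner containment, a.s.\ for large $n$ the cluster contains $B_o(n(1-\epsilon(n)))$ with $\epsilon(n)\downarrow0$, hence has only $o(b_n)$ sites beyond radius $n$; in particular no fat protrusion of linear diameter can reach radius $n(1+\epsilon)$, and the remaining task is to exclude long thin tentacles. This is the genuinely delicate part. Using the resistance estimates of (iii) one shows that the harmonic measure from $o$, for the walk killed on leaving the current cluster, of the outermost cells of the cluster stays polynomially small uniformly over admissible cluster shapes, so that the probability that any given particle's IDLA path reaches radius $n(1+\epsilon/2)$ is small; this is then bootstrapped through the Lawler--Bramson--Griffeath self-correction mechanism — a radial protrusion creates a compensating deficit elsewhere in $B_o(n)$, into which subsequent particles are preferentially absorbed, so the outer radius cannot run away — to bound the expected number of particles reaching radius $n(1+\epsilon)$ by $o(b_n^{-1})$, whence summability after a union bound. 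Carrying this out on $\SG$ requires a multiscale induction over the cell hierarchy in place of the rotational averaging available on $\mathbb{Z}^d$, and together with the sharp asymptotics of (ii) feeding the inner bound it is the step I expect to consume most of the work.
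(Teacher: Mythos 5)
Your overall architecture (Borel--Cantelli plus the Lawler--Bramson--Griffeath comparison for the inner bound, then a separate tentacle-control argument) matches the paper's starting point, but the two technical inputs you lean on are precisely the ones that are not available on $\SG$, and this leaves a genuine gap. For the inner bound, your crux is the quantitative positivity $u_n(x)=b_n g_n(o,x)-\mathbb{E}_x[\tau_n]\geq c(\epsilon)n^{d_w}$ on $B_o(n(1-\epsilon))$, which you propose to extract from sharp radial asymptotics $g_{B_o(r)}(o,x)\sim r^{d_w-d_f}\Gamma(|x|/r)$ and $\mathbb{E}_x[\tau_{B_o(r)}]\sim r^{d_w}\Theta(|x|/r)$ together with strict positivity of the resulting profile bracket. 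No such radial profiles exist in that form on $\SG$: the graph has no rotational symmetry, balls of general radius are not cells of the pre-fractal hierarchy, and the cut-vertex recursions you invoke give exact answers only for very special (dyadic, cell-adapted) configurations. The standard machinery (sub-Gaussian estimates, resistance bounds) gives only two-sided bounds up to constants, under which $b_ng_n(o,x)$ and $\mathbb{E}_x[\tau_n]$ are both $\asymp n^{\beta}$ and the sign of their difference cannot be decided --- this is exactly the obstruction the paper flags. The paper's route around it is different and weaker but sufficient: Lemma \ref{lem:mean-val-gr-fc} proves only $u_n\geq 0$ on $B_o(n(1-\epsilon))$ (for large $n$), by observing that $h_n(z)=b_ng_n(o,z)-\sum_{y\in B_o(n)}g_n(y,z)$ solves the same Dirichlet problem as the divisible sandpile odometer of \cite{sandpile-sg-huss-sava} and comparing the two via the maximum principle; the missing quantitative slack is then bought by releasing $b_n(1+\epsilon)$ particles (so $\mathbb{E}[M]\geq(1+\epsilon/2)\mathbb{E}[\tilde L]$), proving $B_o(n(1-\epsilon))\subset\mathcal{I}(b_n(1+\epsilon))$ and intersecting over $\epsilon'<\epsilon$ to recover the statement with $b_n$ particles. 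Without either the sandpile comparison or your unproved asymptotics, your inner bound does not close. (Your concentration step also needs the LBG decoupling $L\leq\tilde L$ via independent walks launched from every site of $B_o(n)$, since the post-stop contributions are dependent; you gesture at the mean bound but not at the independence surrogate.)

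For the outer bound, your sketch (uniformly small harmonic measure of outer cells over all admissible cluster shapes, a self-correction bootstrap, and an expected number of escaping particles of order $o(b_n^{-1})$) contains no concrete mechanism on $\SG$ and, as stated, would fail: nothing in your inputs controls harmonic measure uniformly over arbitrary clusters, and the claimed $o(b_n^{-1})$ bound is unsupported. The paper instead adapts the stopped-particle iteration of \cite{idla-outerbd-dum-cop}: particles are paused on leaving $B_o(n_j)$ with $n_{j+1}=n_j+k_j^{1/\alpha}$, and Lemma \ref{lem:lemma3.3-duminil} shows a constant fraction of the $k_j$ paused particles is absorbed in each annulus with exponentially good probability, so the $k_j$ decay geometrically and the total radial overshoot is $O(k_0^{1/\alpha})$, where $k_0=b_n-|\mathcal{I}_{b_n}(o\mapsto n)|=o(b_n)$ almost surely by the inner bound (Proposition \ref{prop:LB} and Lemma \ref{lem:an-bn}). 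The replacement for the unavailable weak lower bound hypothesis of \cite{idla-outerbd-dum-cop} is a Green-function infimum estimate $\inf_{y\in B_x(ur)}g_r(x,y)\geq cr^{\beta-\alpha}$ proved by Harnack chaining (Lemma \ref{lem:inf-gr-fr}), fed into a Lawler-style hitting lemma (Lemmas \ref{lem:exit-set-ball} and \ref{lem:hit-set-ball}), following \cite[Lemma 11]{lawler_1995}. If you want to salvage your outline, this is the part to replace wholesale rather than refine.
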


Our proof of Theorem \ref{thm:idla-sierpinski-gasket} combines relevant arguments from previous work on IDLA \cite{lawler_bramson_griffeath, lawler_1995,idla-outerbd-dum-cop} and information about the geometry and the potential theory on $SG$ \cite{BP88,Bar98,KigamiBook}. A key argument used in Theorem \ref{thm:idla-sierpinski-gasket} is the fact that $\SG$ is a finitely ramified fractal, and this makes the analysis of random walks \cite{BP88,Bar98} and quantities related to random walks, such as Green functions and harmonic functions \cite{KigamiBook,StrichartzBook}, easier.
Let us remark that it is possible to prove Theorem \ref{thm:idla-sierpinski-gasket} on the one-sided infinite Sierpinski gasket graph (where the origin $o$ has degree $2$) upon appropriate modification of the proofs.

The rest of the paper is organized as follows. In Section \ref{sec:prelim} we give the basic notions on random walks and related quantities such as Green function and harmonic function. In Section \ref{sec:prelim-sg} we recall some well known properties and results on Sierpinski gasket graphs and random walks on them, which will be subsequently used in the proof of the limit shape for the IDLA cluster. Then in Section \ref{sec:idla-inner-bound} and in Section \ref{sec:idla-outer-bd}
we prove, respectively, the inner bound and the outer bound of Theorem \ref{thm:idla-sierpinski-gasket}. We conclude with some remarks and open questions.

\section{Preliminaries}\label{sec:prelim}
\subsection{Random walks}\label{sec:rw}

Let $G$ be any infinite, locally finite, connected graph. We write $G$ also for the set of its vertices and for $x,y\in G$ we write 
$x \sim y$ if $(x,y)$ is an edge in $G$. We denote by $d(x,y)$ the natural graph distance in $G$,
i.e. the length of the shortest path between $x$ and $y$ in $G$. 
Given a subset $A\subset G$, we define its boundary $\partial A =\{ y\in G\setminus A: y \sim x~\text{for some}~x\in A\}$.
For $x\in G$ and $n\geq 0$,
denote by $B_x(n)=\{y\in G: d(x,y)< n \}$ the ball of center $x$ and radius $n$ in $G$, and by
$\partial B_x(n)=\left\{y\in B_x(n)^c: y\sim x \text{ for some }x\in B_x(n)\right\}$ the boundary of $B_x(n)$. 
Denote by $\deg(x)$ the degree of $x$ in $G$, that is, the number of neighbors of $x$.  We would like to point out that we use the vertex degree as volume measure.

The (discrete-time) simple random walk (SRW) $\big(X(t)\big)_{t\geq 0}$ on $G$ is the (time homogeneous) 
Markov chain with one-step transition probabilities given by
\begin{equation*}
p(x,y):=\mathbb{P}[X(t+1)=y\mid X(t)=x]=\frac{1}{\deg(x)}
\end{equation*}
if $y\sim x$, and $0$ otherwise. 
The walk $X(t)$ is reversible with respect to $\deg$, since
\begin{equation*}
\deg(x) \cdot p(x,y) =\deg(y) \cdot p(y,x).
\end{equation*}
We denote by $\mathbb{P}_x$ and $\mathbb{E}_x$ the probability law and the expectation
of the random walk $X(t)$ starting at $x\in G$, and omit the subscript if the random walk starts at the origin $o\in G$ (some fixed vertex to be chosen later).
The $t$-step transition probabilities are then defined as
\begin{equation*}
p_t(x,y)=\mathbb{P}[X(t)=y\mid X(0)=x]
\end{equation*} 
and we have that
\begin{equation*}
p_t(x,y)= \frac{\deg(y)}{\deg(x)}\cdot p_t(y,x).
\end{equation*}

\paragraph*{Green function.} The Green function $g$ is defined by
\begin{equation*}
g(x,y)=\mathbb{E}_x\Big[\sum_{t=0}^{\infty}\mathbf{1}_{\{X(t)=y\}}\Big]=\sum_{t=0}^{\infty}p_t(x,y)
\end{equation*}
and represents the expected number of visits to $y$ of the random walk $X(t)$ started at $x$.
If $X(t)$ is a recurrent random walk, then $g$ is not defined since every vertex is visited infinitely many times.
A quantity of interest in the context of aggregation models is the \emph{stopped Green function} $g_n$ upon exiting 
a ball in the graph. For some vertex $x\in G$, if 
\begin{equation}\label{eq:exit-time}
\tau_n(x)=\inf\{t: X(t)\notin B_x(n)\}
\end{equation}
is the first time when the random walk $X(t)$ exits $B_x(n)$, then
\begin{equation}\label{eq:stopped-green}
g_n(x,y)=\mathbb{E}_x\Big[\sum_{t=0}^{\tau_n(o)-1}\mathbf{1}_{\{X(t)=y\}}\Big]
\end{equation}
and represents the expected number of visits to $y$ before 
time $\tau_n(o)$, of the random walk $X(t)$ starting at $X(0)=x$. We write $\tau_n:=\tau_n(o)$ if there is no risk for confusion.
For $y\in G$ define
\begin{equation}
\tau_y=\inf\{t:X(t)=y\}
\end{equation}
to be the first time the random walk $X(t)$ visits $y\in G$. 
By standard Markov chain theory, we have that
\begin{equation}\label{eq:st_gr_fc}
\mathbb{P}_y[\tau_x<\tau_n]=\dfrac{g_n(y,x)}{g_n(x,x)} \quad \text{and} \quad \mathbb{P}_o[\tau_x<\tau_n]=\frac{g_n(o,x)}{g_n(x,x)},
\end{equation}

For a function $h:G\to \mathbb{R}$, the \emph{(probabilistic) graph Laplacian} of $h$ is defined as
\begin{equation*}
 \Delta h(x)=\frac{1}{\deg(x)}\sum_{y\sim x} h(y)-h(x).
\end{equation*}
We say that $h$ is \emph{harmonic} on $S\subset G$ if $\Delta h=0$ (that is, the discrete mean value property holds) on $S$.

\begin{defn}\label{def:ehi}
We say that the graph $G$ satisfies an \emph{elliptic Harnack inequality (EHI)} if there exists a positive constant $C$ such that for all $x\in G$, $n>0$, and functions $h\geq 0$ which are harmonic on $B_x(2n)$,
\begin{equation}\label{eq:EHI}\tag{EHI}
\sup_{y\in B_x(n)} h(y) \leq C \inf_{y\in B_x(n)}h(y).
\end{equation}
\end{defn}

\begin{defn}\label{def:p0}
A weighted graph $(G,P)$ satisfies the condition ($p_0$) if there exists $p_0 >0$ such that
\begin{equation}\label{eq:p0}\tag{$p_0$}
\frac{p(x,y)}{\sum_{y}p(x,y)} \geq p_0 \text{ for all } x\sim y,
\end{equation}
where $P$ is the transition matrix of the simple random walk $(X(t))$ on $G$.
\end{defn}
The condition ($p_0$) can be viewed as a lower ellipticity bound on the generator (Laplacian) of the random walk.
For more information on analysis on weighted graphs, see also \cite{keller-lenz}.
We shall also use the connection between random walks and electrical networks. For a function $f:G\rightarrow\mathbb{R}$ define its \emph{energy} by 
\begin{equation*}
\mathcal{E}(f)=\frac{1}{2}\sum_{x,y\in G,x\sim y}\left(f(x)-f(y)\right)^2p(x,y)\deg(x),
\end{equation*}
which represents the energy dissipation in the network $G$ associated with the potential $f$.

\begin{defn}
The effective resistance between two (disjoint) subsets $A, B \subset G$ is defined as
\begin{align}
R_{\rm eff}(A,B) = \left[\inf\left\{\mathcal{E}(f) ~|~ f: G\to\mathbb{R},~ f|_A=1,~ f|_B=0 \right\}\right]^{-1},
\end{align}
with the convention that $\inf \emptyset= \infty$.
\end{defn}
We write $R_{\rm eff}(x,y)$ for $R_{\rm eff}\left(\{x\},\{y\}\right)$. Moreover, $R_{\rm eff}(x,y)$ defines a metric on $G$.

\subsection{Sierpinski gasket graphs} 
\label{sec:prelim-sg}

For the rest of the paper, the state space is the doubly-infinite Sierpinski gasket graph denoted by $\SG$; see Figure \ref{fig:gasket}. Let $(X(t))_{t\geq 0}$ be a simple random walk on $\SG$ starting at $o$, which is recurrent. Actually, it is  strongly recurrent both in the sense of \cite[Definition 1.2]{barlow-coulhon-kumagai-hk-2005}. We recall below some known facts about the growth of $\SG$,
the behavior of random walks and Green functions on $\SG$.

\textbf{Notation: }For two sequences $a_n,b_n$ of real numbers, we write $a_n \asymp b_n$, if there exist a constant $C\geq 1$ such that for all $n\in\mathbb{N}$
\begin{equation*}
\frac{1}{C} b_n \leq a_n \leq C b_n. 
\end{equation*}

On $\SG$ there are three main quantities of interest: the spectral dimension $d_s$, the walk dimension $\beta$ (sometimes denoted also $d_w$) and fractal dimension $\alpha$ (called sometimes also uniform volume growth and denoted $d_f$). Throughout this paper we shall use $\alpha$ and $\beta$ for the two quantities mentioned above. They are given by
\begin{equation}\label{eq:alpha-beta}
d_s=2\frac{\log 3}{\log 5}, \quad \beta=\frac{\log 5}{\log 2}\approx 2.32 , \quad \alpha=\frac{\log 3}{\log 2}\approx 1.56,
\end{equation}
and $\beta-\alpha \approx 0.76$. 
\begin{prop}The following holds on $\SG$:
\begin{enumerate}
\item \textbf{Uniform volume growth:} for every $x\in \SG$ and $n\in\mathbb{N}$, the balls $B_x(n)$ around $x$ of radius $n$ have growth of order $\alpha$:
\begin{equation}\label{eq:vg}\tag{VG}
\mid B_x(n)\mid \asymp n^{\alpha},
\end{equation}
\item \textbf{Elliptic Harnack inequality (EHI):} $\SG$ satisfies $(EHI)$; see \cite[Corollary 2.1.8 \& Proposition 3.2.7]{KigamiBook} for a proof.
\end{enumerate}
\end{prop}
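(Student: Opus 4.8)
The plan is to treat the two assertions separately: (VG) is an elementary counting statement about the self-similar cell structure of $\SG$, while (EHI) rests on the interplay between volume growth and electrical resistance, and is where the finite ramification of $\SG$ becomes decisive.

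For (VG) the starting point is the self-similar cell decomposition of $\SG$. Call a translate or reflection of the level-$m$ gasket graph $V_m$ an \emph{$m$-cell}; these tile $\SG$, meeting only at corner vertices, and every vertex lies in a bounded number $O(1)$ of $m$-cells. Two standard facts drive the estimate: first, $|V_m|=\tfrac12(3^{m+1}+3)\asymp 3^m$; second, the graph distance between any two of the three corners of $V_m$ equals $2^m$, so $\operatorname{diam} V_m = 2^m$ in the graph metric. (The distance identity follows by induction, since a geodesic joining two corners of $V_{m+1}$ crosses exactly two of its three constituent $m$-cells, doubling the distance.) Given $x$ and $n$, I would fix $m$ with $2^m\le n<2^{m+1}$. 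For the lower bound, any $m$-cell containing $x$ lies inside $B_x(n)$, its diameter being $2^m\le n$, so $|B_x(n)|\ge |V_m|\asymp 3^m\asymp n^{\alpha}$, using $3^m=(2^m)^{\alpha}$. For the upper bound, a path of length $<n\le 2^{m+1}$ started at $x$ can leave the $(m+1)$-cell(s) of $x$ only through their corners, so $B_x(n)$ is contained in the union of the $O(1)$ many $(m+1)$-cells meeting those corners; hence $|B_x(n)|\le C\,3^{m+1}\asymp n^{\alpha}$. This gives $|B_x(n)|\asymp n^{\alpha}$ uniformly in $x$.

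For (EHI) I would invoke the characterization of heat-kernel and Harnack estimates on strongly recurrent graphs of \cite{barlow-coulhon-kumagai-hk-2005}: since the SRW on $\SG$ is strongly recurrent ($\beta>\alpha$), it suffices to verify volume doubling together with the resistance estimate
\begin{equation*}
R_{\rm eff}\bigl(B_x(n),\,B_x(2n)^c\bigr)\asymp n^{\beta-\alpha},
\end{equation*}
whereupon these two conditions yield the sub-Gaussian heat kernel bounds $HK(\beta)$, hence the parabolic and a fortiori elliptic Harnack inequality. Volume doubling is immediate from (VG), so the content lies entirely in the resistance estimate. The classical renormalization computation (the $\Delta$--$Y$ transform) shows that the corner-to-corner effective resistance across an $m$-cell equals $(5/3)^m$ up to a constant; since $2^{\beta-\alpha}=5/3$, this is $\asymp (2^m)^{\beta-\alpha}$, comparable to $\operatorname{diam}^{\beta-\alpha}$. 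Because $\SG$ is finitely ramified, $B_x(n)$ communicates with $B_x(2n)^c$ only through the $O(1)$ corner vertices of the $O(1)$ cells at scale $2^m\asymp n$ that it meets, so cutting the network at these finitely many vertices reduces the problem to a fixed finite series/parallel network.

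I expect the main obstacle to be precisely the lower bound in the resistance estimate: one must argue, uniformly in $x$ and $n$, that shorting together all vertices outside the relevant $O(1)$ cells cannot decrease $R_{\rm eff}(B_x(n),B_x(2n)^c)$ below $c\,n^{\beta-\alpha}$. It is exactly the finite ramification — the existence of $O(1)$ cut vertices separating the ball from its complement — that legitimizes the reduction to a bounded network and thereby makes (EHI) tractable. As an entirely self-contained alternative, bypassing \cite{barlow-coulhon-kumagai-hk-2005}, I could instead establish the Harnack inequality directly from the harmonic-extension rule on cells: a nonnegative function harmonic on $B_x(2n)$ takes comparable values at the three corners of every cell it meets, and comparability then propagates across the $O(1)$ cells covering $B_x(n)$ with a multiplicative constant depending only on the fixed extension operator.
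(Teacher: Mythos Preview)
Your proposal is correct and considerably more detailed than the paper's own treatment: the paper does not prove this proposition at all, but simply defers to the literature (\cite{Bar98,Barlow-heat-kernels-03} for (VG) and \cite{KigamiBook} for (EHI)) and moves on. Your cell-counting argument for (VG) is the standard one underlying those references and is fine as written. For (EHI), the paper points to Kigami's book, whose proof goes through the harmonic extension algorithm on p.c.f.\ self-similar structures --- essentially the ``self-contained alternative'' you sketch at the end. Your primary route via \cite{barlow-coulhon-kumagai-hk-2005} (volume doubling plus the resistance scaling $R_{\rm eff}\asymp n^{\beta-\alpha}$ yielding sub-Gaussian heat-kernel bounds, hence parabolic and then elliptic Harnack) is an equally valid and arguably more robust path, and the paper itself uses exactly this resistance estimate later (Lemma~\ref{lem:up_bd_gn} and Lemma~\ref{lem:inf-gr-fr}). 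Both arguments ultimately hinge on finite ramification, as you correctly isolate.
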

The uniform volume growth, called also \emph{Ahlfors regularity condition} 
can be easily deduced on $\SG$; for more details see \cite{Bar98,Barlow-heat-kernels-03}.
Moreover, equation \eqref{eq:vg} implies also the \emph{volume doubling condition}: there exist $c>0$ such that
\begin{equation}\label{eq:vd}\tag{VD}
\mid B_{x}(2n)\mid \leq c \mid B_{x}(n)\mid, \quad \text{ for all } x\in \SG, n \geq 1.
\end{equation}


\begin{rem}
If $P$ is the transition matrix of the simple random walk $X(t)$ on $\SG$, the $(p_0)$ condition is satisfied with $p_0=\frac{1}{4}$.
\end{rem}

The next result is well-known in the study of random walks on fractal graphs, see \emph{e.g.\@} \cite[Proposition 8.11]{Bar98} or \cite[Corollary 2.3]{Barlow-heat-kernels-03}. 

\begin{lem}
\label{lem:exit-time}
Simple random walk on SG is subdiffusive or sub-Gaussian: the expected exit time from balls in $\SG$ has order $\beta>2$, that is for every $x\in \SG$ and radius $n\in\mathbb{N}$
\begin{equation}\label{eq:ebeta}\tag{$E_{\beta}$}
\mathbb{E}_x[\tau_n(x)]  \asymp n^{\beta},
\end{equation}
where $\tau_n(x)$ is defined as in \eqref{eq:exit-time}. 
\end{lem}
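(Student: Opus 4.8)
The plan is to reduce the claim to two geometric facts about $\SG$ that already follow from its self-similarity — the volume growth \eqref{eq:vg} and a matching \emph{resistance growth} estimate — and to combine them through the standard expression of the mean exit time in terms of the killed Green function. Write $B=B_x(n)$ and let $g_B(\cdot,\cdot)$ be the Green function of the walk killed on leaving $B$, so that $\mathbb{E}_x[\tau_n(x)]=\sum_{y\in B}g_B(x,y)$, while the escape-probability identity gives $g_B(y,y)=\deg(y)\,R_{\rm eff}\bigl(y,\SG\setminus B\bigr)$ and, for $y\in B$, $g_B(x,y)=\mathbb{P}_x[\tau_y<\tau_n(x)]\,g_B(y,y)$. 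Since $\SG$ has bounded degrees and satisfies \eqref{eq:p0} with $p_0=\tfrac14$, the whole estimate reduces to controlling effective resistances on $\SG$.

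The first step is therefore to establish
\[
R_{\rm eff}(x,y)\asymp d(x,y)^{\,\beta-\alpha}\qquad\text{and}\qquad R_{\rm eff}\bigl(x,\SG\setminus B_x(n)\bigr)\asymp n^{\,\beta-\alpha}.
\]
This is where the exponent $\beta=\log 5/\log 2$ enters. On a triangular cell of side $2^{k}$ the effective resistance between two of its corners is multiplied by the factor $5/3$ when $k$ is increased by one — a short computation applying the star–triangle ($Y$–$\Delta$) transform to the three level-$k$ sub-cells of a level-$(k+1)$ cell, see \cite{BP88} — so a cell of side $r$ has corner-to-corner resistance $\asymp r^{\log(5/3)/\log 2}=r^{\,\beta-\alpha}$. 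One then transfers this from cells to metric balls: any ball $B_x(n)$ both contains and is contained in a union of a bounded number of cells of side comparable to $n$, and the relevant resistances of these cells are comparable to one another by the series and parallel laws together with \eqref{eq:p0}; finite ramification (each cell meets the rest of $\SG$ in at most three points) is what makes this gluing argument clean.

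Granting the resistance estimate, the upper bound follows from
\[
g_B(x,y)=\mathbb{P}_x[\tau_y<\tau_n(x)]\,g_B(y,y)\le g_B(y,y)=\deg(y)\,R_{\rm eff}(y,\SG\setminus B)\le C\,n^{\,\beta-\alpha}
\]
for every $y\in B$ (using that $\SG\setminus B$ lies within distance $O(n)$ of $y$), summed over $y\in B$ and combined with $|B_x(n)|\asymp n^{\alpha}$. For the lower bound, fix a small $\delta>0$: for $y\in B_x(\delta n)$ the two displayed resistance asymptotics give $R_{\rm eff}(x,y)\le\tfrac12 R_{\rm eff}(y,\SG\setminus B)$ once $\delta$ is small enough, hence $\mathbb{P}_x[\tau_y<\tau_n(x)]\ge\tfrac12$ by a standard resistance inequality (equivalently, by \eqref{eq:EHI}); therefore $g_B(x,y)\ge c\,n^{\,\beta-\alpha}$, and summing over the $\asymp(\delta n)^{\alpha}$ vertices of $B_x(\delta n)$ yields $\mathbb{E}_x[\tau_n(x)]\ge c'\,n^{\beta}$. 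The general version of this Green-function/resistance argument is exactly \cite[Proposition 8.11]{Bar98} and \cite[Corollary 2.3]{Barlow-heat-kernels-03}.

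A more self-contained route bypasses resistances: letting $\xi_k$ be the first hitting time of the other two corners of a side-$2^{k}$ cell started at one corner, and observing the walk only at the corners of the three level-$k$ sub-cells of a level-$(k+1)$ cell, the strong Markov property produces a random walk on the six-vertex graph $V_1$ whose steps are i.i.d.\ excursions of mean length $\mathbb{E}[\xi_k]$; a symmetry computation on $V_1$ shows that the expected number of such excursions needed to exit equals $5$, so $\mathbb{E}[\xi_{k+1}]=5\,\mathbb{E}[\xi_k]$, whence $\mathbb{E}[\xi_k]\asymp 5^{k}=(2^{k})^{\beta}$, and one then sandwiches $\tau_n(x)$ between the crossing times of the $O(1)$ cells of side $\asymp n$ surrounding $B_x(n)$. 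I expect the main obstacle in either approach to be precisely this passage from the self-similar cells — on which the factor $5$ (or $5/3$) is transparent — to the graph-metric balls $B_x(n)$: identifying the $O(1)$ approximating cells and propagating the comparability of resistances (or of crossing times) through the gluing, together with, in the Green-function route, the \eqref{eq:EHI}-based lower bound $\mathbb{P}_x[\tau_y<\tau_n(x)]\gtrsim 1$ for $y$ well inside $B_x(n)$. The self-similar recursion itself is the classical Barlow–Perkins computation and is not the difficulty.
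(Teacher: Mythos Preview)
Your proposal is correct, and in fact provides far more than the paper does: the paper treats this lemma as a known fact and gives no proof, simply citing \cite[Proposition 8.11]{Bar98} and \cite[Corollary 2.3]{Barlow-heat-kernels-03}. Your sketch is essentially an outline of the content behind those references --- the resistance growth $R_{\rm eff}(x,y)\asymp d(x,y)^{\beta-\alpha}$ coming from the $5/3$ renormalization factor, combined with the Green-function identity $\mathbb{E}_x[\tau_n(x)]=\sum_{y}g_B(x,y)$ and volume growth --- and your alternative route via the recursion $\mathbb{E}[\xi_{k+1}]=5\,\mathbb{E}[\xi_k]$ is precisely the Barlow--Perkins computation. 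Both arguments are sound; the only point worth tightening is the lower bound on the hitting probability $\mathbb{P}_x[\tau_y<\tau_n(x)]$, where the cleanest justification is the energy inequality $|1-\mathbb{P}_x[\tau_y<\tau_n(x)]|^2\le R_{\rm eff}(x,y)/g_B(y,y)$ rather than a bare comparison of resistances, but this is exactly the argument in \cite[Proposition 3.4]{barlow-coulhon-kumagai-hk-2005} and the paper itself invokes it in later lemmas.
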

In the proof of Theorem \ref{thm:idla-sierpinski-gasket} we will need the expected exit time $\tau_n(o)$ from balls $B_o(n)$ when starting the random walk at an arbitrary point $x$ inside $B_o(n)$. The following upper bound is easily deduced from existing results.

\begin{lem}[Uniform upper bound for the exit time]\label{lem:exit-time-upper-bound}
There exists $C>0$ such that for every $n\geq 1$, $x\in \SG$ and $y\in B_x(n)$, we have
$ \mathbb{E}_y[\tau_n(x)]\leq C n^{\beta}$.
\end{lem}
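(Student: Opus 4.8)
The plan is to deduce this from Lemma~\ref{lem:exit-time} by a simple monotonicity argument for exit times, exploiting the uniformity of the constant in \eqref{eq:ebeta} over the center of the ball. The key geometric observation is that a ball of radius $n$ around $x$ is contained in a ball of radius $2n$ around any of its points: if $y\in B_x(n)$, then $d(x,y)\le n-1$, and hence for every $z\in B_x(n)$ the triangle inequality gives $d(y,z)\le d(y,x)+d(x,z)\le 2(n-1)<2n$, so that $B_x(n)\subseteq B_y(2n)$.

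Next I would use that exit times are monotone under inclusion of the domains when the walk starts in the smaller set. Since the simple random walk started at $y$ sits in both $B_x(n)$ and $B_y(2n)$ at time $0$, and $B_x(n)\subseteq B_y(2n)$, the walk cannot leave the larger set $B_y(2n)$ before it leaves the smaller set $B_x(n)$; that is, $\tau_n(x)\le\tau_{2n}(y)$ holds $\mathbb{P}_y$-almost surely. Taking expectations under $\mathbb{P}_y$ yields $\mathbb{E}_y[\tau_n(x)]\le\mathbb{E}_y[\tau_{2n}(y)]$.

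Finally, I would invoke Lemma~\ref{lem:exit-time}, which provides a constant $C_0\geq 1$, \emph{independent of the base point}, such that $\mathbb{E}_y[\tau_{2n}(y)]\le C_0(2n)^{\beta}$ for all $y\in\SG$ and all $n\geq 1$. Combining this with the previous inequality and setting $C:=2^{\beta}C_0$ gives $\mathbb{E}_y[\tau_n(x)]\le C n^{\beta}$, which is the claimed bound.

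There is no real obstacle here; the proof is a two-line comparison. The only point that needs care is that the implied constant in \eqref{eq:ebeta} is uniform over the center of the ball — which is precisely how Lemma~\ref{lem:exit-time} is stated — so that the resulting constant $C$ in Lemma~\ref{lem:exit-time-upper-bound} depends neither on $x$, nor on $y$, nor on $n$.
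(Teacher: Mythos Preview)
Your argument is correct. The inclusion $B_x(n)\subseteq B_y(2n)$ for $y\in B_x(n)$, the monotonicity $\tau_n(x)\leq\tau_{2n}(y)$ under $\mathbb{P}_y$, and the appeal to the uniform upper bound in \eqref{eq:ebeta} are all sound, and the constant $C=2^\beta C_0$ is indeed independent of $x$, $y$, and $n$.

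The paper takes a different route: it simply invokes \cite[Proposition~3.4]{barlow-coulhon-kumagai-hk-2005}, noting that $\SG$ satisfies the hypotheses there with $\eta(n)=n^\beta$, and reads off the bound. Your approach is more elementary and self-contained, since it uses only Lemma~\ref{lem:exit-time} (already stated in the paper) and a one-line geometric comparison; no further external input is needed. The paper's citation, by contrast, appeals to a more general machinery (effective resistance and volume growth conditions) that delivers this estimate as part of a package of equivalent conditions for strongly recurrent graphs. Both are valid; yours is the shorter path given what the paper has already set up.
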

\begin{proof} 
Since all the conditions from \cite[Proposition 3.4]{barlow-coulhon-kumagai-hk-2005}
are satisfied for $\SG$ with the function $\eta(n)=n^{\beta}$,
the bound follows immediately.
\end{proof}

A corresponding lower bound holds, provided that $x$ is not too close to the boundary of $B_o(n)$.

\begin{lem}[Lower bound for the exit time]\label{lem:exit-time-lower-bound}
For every $\epsilon \in (0,1)$, there exists $c=c(\epsilon)>0$ such that for every $n\geq 1$ and every $x\in B_o(n(1-\epsilon))$, we have
$
\mathbb{E}_x[\tau_n(o)] \geq c n^{\beta}.
$
\end{lem}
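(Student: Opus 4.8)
The plan is to bound $\mathbb{E}_x[\tau_n(o)]$ from below by the expected exit time of a smaller ball \emph{centred at $x$}, to which the sub-Gaussian estimate of Lemma~\ref{lem:exit-time} applies directly. First I would set $m:=\lfloor\epsilon n\rfloor$ and record the inclusion $B_x(m)\subseteq B_o(n)$, valid for every $x\in B_o(n(1-\epsilon))$: if $d(x,y)<m$, then by the triangle inequality $d(o,y)\le d(o,x)+d(x,y)<n(1-\epsilon)+m\le n$, so $y\in B_o(n)$.

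Next I would exploit the monotonicity of exit times under this inclusion. Since $B_x(m)\subseteq B_o(n)$, the complement of $B_o(n)$ is contained in the complement of $B_x(m)$, so along every trajectory the walk leaves $B_x(m)$ no later than it leaves $B_o(n)$; hence $\tau_m(x)\le\tau_n(o)$ holds $\mathbb{P}_x$-almost surely, and therefore $\mathbb{E}_x[\tau_n(o)]\ge\mathbb{E}_x[\tau_m(x)]$. Applying Lemma~\ref{lem:exit-time}, there is a constant $C\ge 1$, independent of $x$ and $m$, with $\mathbb{E}_x[\tau_m(x)]\ge C^{-1}m^\beta$ for every $m\ge 1$.

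Finally I would convert this into a bound in $n$. Put $c(\epsilon):=C^{-1}(\epsilon/2)^\beta$. If $n\ge 2/\epsilon$, then $m\ge 1$ and $m=\lfloor\epsilon n\rfloor\ge\epsilon n-1\ge\epsilon n/2$, so $\mathbb{E}_x[\tau_n(o)]\ge C^{-1}m^\beta\ge c(\epsilon)\,n^\beta$. For the finitely many $n$ with $1\le n<2/\epsilon$, the trivial bound $\mathbb{E}_x[\tau_n(o)]\ge 1$ (the walk needs at least one step to leave $B_o(n)\ni o$) already gives $\mathbb{E}_x[\tau_n(o)]\ge 1\ge C^{-1}=c(\epsilon)(2/\epsilon)^\beta>c(\epsilon)\,n^\beta$. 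Thus the claimed inequality holds for all $n\ge 1$.

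I do not expect a genuine obstacle here — the content of the lemma is precisely this comparison with a smaller ball. The only points that require some care are getting the direction of the comparison $\tau_m(x)\le\tau_n(o)$ right (it is a consequence of $B_x(m)\subseteq B_o(n)$, not of the reverse inclusion) and absorbing the floor in $m=\lfloor\epsilon n\rfloor$ together with the finitely many small values of $n$ for which $m$ could vanish.
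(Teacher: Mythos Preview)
Your proof is correct and follows exactly the same approach as the paper: compare with the smaller ball $B_x(\epsilon n)\subseteq B_o(n)$ and apply the lower bound in \eqref{eq:ebeta}. You are simply more careful than the paper in handling the floor $m=\lfloor\epsilon n\rfloor$ and the finitely many small values of $n$, which the paper absorbs silently into the constant.
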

\begin{proof}
If $x\in B_o(n(1-\epsilon))$, then $B_x(\epsilon n) \subset B_o(n)$. Since a random walk started at $x$ must exit $B_x(\epsilon n)$ before leaving $B_o(n)$, it follows that
\[
\mathbb{E}_x[\tau_n(o)] \geq \mathbb{E}_x[\tau_{\epsilon n}(x)] \geq C^{-1} \epsilon^\beta n^\beta,
\]
for a constant $C\geq 1$ independent of $n$ and $\epsilon$, where we used the lower bound in \eqref{eq:ebeta}.
\end{proof}

\begin{remark}
The function $h(x)=\mathbb{E}_x[\tau_n]$ solves the Dirichlet problem
\[
\left\{\begin{array}{ll} \Delta h= 1 & \text{in  } B_o(n), \\  h=0 & \text{on } \partial B_0(n).\end{array}\right.
\]
On $SG$ an exact expression of $h$ can be obtained when $n=2^k$ for $k \in \mathbb{N}$. Indeed, in the proof of Lemma \ref{lem:mean-val-gr-fc} below, we will encounter a related Dirichlet problem with $\Delta h=1$ replaced by $\Delta h=1- |B_o(n)|\delta_o$, whose solution is fully addressed in \cite{sandpile-sg-huss-sava}. We believe it is possible to find sharper estimates of $h$ for all radii $n$ using harmonic splines \cite{splines}, see also the recent work \cite{GKQS14}. In any case, the estimates contained in Lemmas \ref{lem:exit-time-upper-bound} and \ref{lem:exit-time-lower-bound} will suffice for the purposes of this paper.
\end{remark}

\begin{lem}[Upper bound for the stopped Green function]\label{lem:up_bd_gn}
There exists $C>0$ such that for every $n\geq 1$ and every $x\in B_o(n)$, 
\begin{equation*}
g_n(x,x)\leq  C n^{\beta-\alpha}\quad \text{and} \quad g_n(o,x)\leq C n^{\beta-\alpha}
\end{equation*}
\end{lem}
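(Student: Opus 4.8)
The plan is to reduce the diagonal estimate to a bound on an effective resistance, which on $\SG$ is controlled by the resistance exponent $\beta-\alpha$. Recall the standard identity from the theory of reversible Markov chains: writing $A=\partial B_o(n)$ for the killing set of the stopped walk, and using that for simple random walk the total conductance at a vertex equals its degree, one has
\[
g_n(x,x)=\frac{1}{\mathbb{P}_x[\tau_A<\tau_x^+]}=\deg(x)\,R_{\rm eff}(x,\partial B_o(n)),
\]
where $\tau_x^+=\inf\{t\geq 1: X(t)=x\}$ is the first return time to $x$; see e.g.\ \cite{Bar98}. (The first equality is because the number of visits to $x$ before exiting $B_o(n)$ is geometric with success probability $\mathbb{P}_x[\tau_A<\tau_x^+]$, the second is the escape-probability/effective-conductance formula.) Since every vertex of $\SG$ has degree at most $4$, it therefore suffices to prove $R_{\rm eff}(x,\partial B_o(n))\leq C n^{\beta-\alpha}$ with $C$ independent of $n$ and of $x\in B_o(n)$.

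For this I would combine Rayleigh monotonicity with the point-to-point resistance estimate on $\SG$. It is well known (see \cite{BP88,Bar98}) that $R_{\rm eff}(x,y)\asymp d(x,y)^{\beta-\alpha}$ on $\SG$; only the upper bound $R_{\rm eff}(x,y)\leq C' d(x,y)^{\beta-\alpha}$ is needed here, and it follows by restricting the network to a single cell (a translated copy of a finite Sierpinski gasket graph) of level $k$ with $2^k\asymp d(x,y)$ containing both $x$ and $y$: the corner-to-corner resistance of such a cell is of order $(5/3)^k=(2^k)^{\beta-\alpha}$, and deleting edges only increases resistance. Now fix any $y\in\partial B_o(n)$; since $d(o,y)=n$ and $d(o,x)\leq n-1$, the triangle inequality gives $d(x,y)<2n$, and enlarging the target set can only decrease the resistance, so
\[
R_{\rm eff}(x,\partial B_o(n))\leq R_{\rm eff}(x,y)\leq C' d(x,y)^{\beta-\alpha}\leq C'(2n)^{\beta-\alpha}.
\]
Multiplying by $\deg(x)\leq 4$ yields $g_n(x,x)\leq C n^{\beta-\alpha}$.

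The off-diagonal bound is then immediate from \eqref{eq:st_gr_fc}: since $\mathbb{P}_o[\tau_x<\tau_n]\leq 1$, we get $g_n(o,x)=\mathbb{P}_o[\tau_x<\tau_n]\,g_n(x,x)\leq g_n(x,x)\leq C n^{\beta-\alpha}$, which is the second claimed inequality. The only non-routine ingredient is the resistance estimate $R_{\rm eff}(x,y)\lesssim d(x,y)^{\beta-\alpha}$, i.e.\ the input from the potential theory of $\SG$; everything else is bookkeeping. As an alternative to the electrical argument one could invoke the known Green-function bound $g_{B_x(r)}(x,x)\asymp r^{\beta-\alpha}$ valid for graphs satisfying (VD), (EHI) and $(E_\beta)$ — all verified above for $\SG$, cf.\ \cite{barlow-coulhon-kumagai-hk-2005} — and then pass from balls centred at $x$ to the ball $B_o(n)$ using $B_o(n)\subset B_x(2n)$ and monotonicity of $y\mapsto g_{D}(y,y)$ under enlargement of the domain $D$; this gives $g_n(x,x)=g_{B_o(n)}(x,x)\leq g_{B_x(2n)}(x,x)\asymp n^{\beta-\alpha}$ for every $x\in B_o(n)$.
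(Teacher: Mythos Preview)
Your proof is correct and follows essentially the same route as the paper: both arguments identify $g_n(x,x)$ with (a bounded multiple of) the effective resistance $R_{\rm eff}(x,B_o(n)^c)$, bound that by the point-to-point resistance $R_{\rm eff}(x,y)\asymp d(x,y)^{\beta-\alpha}$ for some $y\in B_o(n)^c$ via Rayleigh monotonicity, use $d(x,y)<2n$, and then deduce the off-diagonal bound from $g_n(o,x)\leq g_n(x,x)$ via \eqref{eq:st_gr_fc}. The paper simply cites \cite{barlow-coulhon-kumagai-hk-2005} for the resistance estimate and the resistance--Green-function identity, whereas you spell these out (and include the degree factor in the identity, which the paper's $\asymp$-level argument absorbs into constants); your closing alternative via domain monotonicity $g_{B_o(n)}(x,x)\leq g_{B_x(2n)}(x,x)$ is also fine and equivalent in spirit.
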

\begin{proof}
The graph $\SG$ is a strongly recurrent graph as defined in 
\cite[Definition 1.2]{barlow-coulhon-kumagai-hk-2005}
and has volume growth of exponent $\alpha<\beta$. Therefore it satisfies condition 
$(VG(\beta-))$ from \cite[Theorem 1.3]{barlow-coulhon-kumagai-hk-2005}. Since $|B_x(d(x,y))|\asymp d(x,y)^{\alpha}$, \cite[Theorem 1.3]{barlow-coulhon-kumagai-hk-2005} gives
\begin{equation*}
R_{\rm eff}(x,y)\asymp \dfrac{d(x,y)^{\beta}}{|B_x(d(x,y))|}\asymp d(x,y)^{\beta-\alpha},
\end{equation*}
where $R_{\rm eff}(x,y)$ represents the effective resistance between $x$ and $y$. In view of the connection between random walks and electrical networks as in the proof of \cite[Proposition 3.4]{barlow-coulhon-kumagai-hk-2005}, together with \cite[Theorem 1.3]{barlow-coulhon-kumagai-hk-2005}) we obtain
\begin{equation*}
g_n(x,x)=R_{\rm eff}(x,B_o(n)^c)\leq R_{\rm eff}(x,y)\asymp d(x,y)^{\beta-\alpha}, \quad \text{ for all } x\in SG, y\in B_o(n)^c.
\end{equation*}
The above inequality together with 
$d(x,y)< 2n$  yield $g_n(x,x)\leq C n^{\beta-\alpha}$ for some constant $C>0$, which proves the first part of the claim.
Equation \eqref{eq:st_gr_fc} implies that $g_n(o,x)\leq g_n(x,x)$, and this completes the proof.
\end{proof}

\begin{lem}
The stopped Green function $g_n(o,o)$ has growth of order $\beta-\alpha$:
\begin{equation*}
g_n(o,o)\asymp n^{\beta-\alpha}
\end{equation*}
\end{lem}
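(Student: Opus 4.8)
The plan is to read off the upper bound from results already in hand and to get the matching lower bound from the expected exit time. The inequality $g_n(o,o)\le C n^{\beta-\alpha}$ is precisely the special case $x=o$ of Lemma \ref{lem:up_bd_gn} (equivalently, the bound $g_n(o,o)=R_{\rm eff}(o,B_o(n)^c)\le R_{\rm eff}(o,y)\asymp d(o,y)^{\beta-\alpha}\le(2n)^{\beta-\alpha}$ proved there), so only the lower bound $g_n(o,o)\ge c\,n^{\beta-\alpha}$ needs to be established.

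For the lower bound the starting point is the identity
\[
\mathbb{E}_o[\tau_n]=\sum_{y\in B_o(n)} g_n(o,y),
\]
which follows by writing $\tau_n=\sum_{t\ge 0}\mathbf 1_{\{t<\tau_n\}}=\sum_{t\ge 0}\sum_{y\in B_o(n)}\mathbf 1_{\{X(t)=y\}}$ and taking expectations. The point is then to bound each summand $g_n(o,y)$ by $g_n(o,o)$ up to a universal constant. Here I would invoke reversibility of the random walk killed upon leaving $B_o(n)$: its sub-Markov kernel $\hat p(u,v)=p(u,v)\mathbf 1_{\{u,v\in B_o(n)\}}$ is reversible with respect to $\deg$, so $\hat p_t(u,v)\deg(v)=\hat p_t(v,u)\deg(u)$ and hence $\deg(y)\,g_n(o,y)=\deg(o)\,g_n(y,o)$; combining this with \eqref{eq:st_gr_fc}, which gives $g_n(y,o)=g_n(o,o)\,\mathbb{P}_y[\tau_o<\tau_n]\le g_n(o,o)$, and with the fact that on $\SG$ every vertex has degree $2$ or $4$ while $\deg(o)=4$, we obtain $g_n(o,y)\le 2\,g_n(o,o)$ uniformly in $y\in B_o(n)$.

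Plugging this into the identity and using Lemma \ref{lem:exit-time} for $\mathbb{E}_o[\tau_n]\ge c\,n^{\beta}$ together with the volume growth \eqref{eq:vg} in the form $|B_o(n)|\le C n^{\alpha}$, we get
\[
c\,n^{\beta}\ \le\ \mathbb{E}_o[\tau_n]\ =\ \sum_{y\in B_o(n)} g_n(o,y)\ \le\ 2\,g_n(o,o)\,|B_o(n)|\ \le\ 2C\,g_n(o,o)\,n^{\alpha},
\]
so $g_n(o,o)\ge c'\,n^{\beta-\alpha}$, and together with the upper bound this yields $g_n(o,o)\asymp n^{\beta-\alpha}$. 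The argument is short; the only step needing a little care is the uniform comparison $g_n(o,y)\le C g_n(o,o)$, and I would make sure to prove it via the reversibility identity above rather than through the resistance estimate $g_n(y,y)=R_{\rm eff}(y,B_o(n)^c)\le C n^{\beta-\alpha}$, which would be mildly circular in spirit. (Alternatively, one could deduce the lower bound $R_{\rm eff}(o,B_o(n)^c)\gtrsim n^{\beta-\alpha}$ from the self-similar resistance renormalization on $\SG$, or from the resistance lower bounds in \cite{barlow-coulhon-kumagai-hk-2005}, but the exit-time route keeps the paper self-contained.)
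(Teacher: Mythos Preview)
Your proof is correct and follows essentially the same route as the paper: the upper bound from Lemma~\ref{lem:up_bd_gn}, and the lower bound via $\mathbb{E}_o[\tau_n]=\sum_{y\in B_o(n)} g_n(o,y)\le C\,|B_o(n)|\,g_n(o,o)$ combined with \eqref{eq:ebeta} and \eqref{eq:vg}. The only cosmetic difference is that the paper writes $g_n(o,y)=g_n(y,o)$ directly (the doubly-infinite $\SG$ is $4$-regular, so no degree correction is needed) and then applies \eqref{eq:st_gr_fc}, whereas you carry the harmless factor~$2$ from the degree ratio; either way the argument is the same.
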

\begin{proof}
The upper bound $Cn^{\beta-\alpha}$ follows from Lemma \ref{lem:up_bd_gn} by taking $x=o$.
For the lower bound, from Equation \eqref{eq:ebeta}, we have 
\begin{align*}
\frac{1}{C} n^{\beta}\leq \mathbb{E}_o[\tau_n]&=\sum_{x\in B_o(n)}g_n(o,x)=\sum_{x\in B_o(n)}g_n(x,o)=\sum_{x\in B_o(n)}\mathbb{P}_x[\tau_o<\tau_n]g_n(o,o)\\
&\leq 
\sum_{x\in B_o(n)}g_n(o,o)=|B_o(n)|g_n(o,o)\leq C'n^{\alpha}g_n(o,o),
\end{align*}
which gives
\begin{equation*}
\frac{1}{C\cdot C'}n^{\beta-\alpha}\leq g_n(o,o)\leq C \cdot C' n^{\beta -\alpha},
\end{equation*}
and this finishes the proof.
\end{proof}

\section{IDLA on Sierpinski gasket graphs}
\label{sec:IDLAProof}

In this section we prove the shape result in Theorem \ref{thm:idla-sierpinski-gasket}.
This will be done in two parts: in Section \ref{sec:idla-inner-bound} we prove the inner bound $B_o\left(n(1-\epsilon)\right)\subset \mathcal{I}(b_n)$, and in Section \ref{sec:idla-outer-bd} based on the inner bound, we prove the  outer bound $\mathcal{I}(b_n)\subset B_o\left(n(1+\epsilon)\right)$.
Many of the existing proofs of the limiting shape for the IDLA cluster use the elegant idea of \cite{lawler_bramson_griffeath}
for internal DLA on $\mathbb{Z}^d$. Of course, on other state spaces, different estimates for Green functions, expected exit times from balls
are needed, depending very much on the geometry of the underlying graph.

\subsection{The inner bound for the IDLA cluster}
\label{sec:idla-inner-bound}

The proof of the inner bound in Theorem \ref{thm:idla-sierpinski-gasket},
is based on understanding the divisible sandpile model and its shape on $\SG$. The divisible sandpile model was introduced in 
\cite{peres_levine_strong_spherical}, and it was also used on comb lattices as in \cite{idla-comb-huss-sava} in order to prove the shape result for the IDLA cluster. These two models (IDLA and divisible sandpile) share many properties even though IDLA is a probabilistic growth model based on random walks, while the other one is strictly
deterministic. 

We shortly describe the divisible sandpile model on any graph $G$ here. Suppose we have an initial mass distribution of $n$ particles of sand at the origin $o$ of $G$. At each time-step $k$, we choose a vertex which has mass $\geq 1$. This vertex \emph{topples}, which means that is keeps mass one for itself and distributes the rest equally to the neighbors. For any time $k$, the \emph{odometer function} $u_k:G\to\mathbb{R}$ at vertex $x$ represents the total mass emitted from $x$ during the first $k$ topplings.
Provided
every vertex is chosen infinitely often, it is proven in \cite{peres_levine_strong_spherical}, that as time  $k\to\infty$ the initial mass distribution and the odometer function converge pointwise to a limit distribution $\mu$
and $u$ respectively. The limit mass distribution satisfies $0\leq \mu(x)\leq 1$, for each $x\in G$.
The \emph{divisible sandpile cluster} is defined as the set $\{x\in G: u(x)>0\}$.  The limit functions $\mu$ and $u$ do not depend on the order of topplings.
A precise analysis of the divisible sandpile model on $\SG$ has been done 
 in \cite{sandpile-sg-huss-sava}, where the authors obtain the limit shape for the sandpile cluster.
  
The next result is similar to \cite[Lemma 3]{lawler_bramson_griffeath} and the proof is based on the
divisible sandpile model and the odometer function on $\SG$.

\begin{lem}\label{lem:mean-val-gr-fc}
Fix $\epsilon>0$. For $n$ sufficiently large and $z\in B_o\left(n(1-\epsilon)\right)$, we have
\begin{equation*}
 \sum_{y\in B_o(n)}g_n(y,z)\leq |B_o(n)|g_n(o,z).
\end{equation*}
\begin{proof}
Define the function $h_n:B_o(n)\to \mathbb{R}$, by
\begin{equation*}
 h_n(z)=|B_o(n)|g_n(o,z)- \sum_{y\in B_o(n)}g_n(y,z).
\end{equation*}
By using the linearity of the Laplace operator together with 
\begin{equation*}
 \Delta g_n(o,z)=-\delta_o(z), \ \text{ and } \Delta g_n(y,z)=-\delta_y(z),
\end{equation*}
we obtain that $h_n(z)$ solves the following Dirichlet problem on $\SG$:
\begin{equation*}
 \begin{cases}
   \Delta h_n(z) & =  1-|B_o(n)|\delta_o(z),\quad z\in B_o(n) \\
   h_n(z)        & =  0,\quad z\in B^c_o(n).
 \end{cases}
\end{equation*}
The Laplace of the odometer function $u$ on $\SG$ satisfies equation (3) from
 \cite{sandpile-sg-huss-sava}, when starting with initial sand distribution $\mu_0$. That is, if we start with mass $|B_o(n)|$ at the origin $o$, then the odometer function $u$ satisfies 
 \begin{equation*}
 \begin{cases}
   \Delta u(z) & =  1-|B_o(n)|\delta_o(z),\quad z\in \mathcal{S} \\
   u(z)        & =  0,\quad z\in \mathcal{S}^c,
 \end{cases}
\end{equation*}
where $\mathcal{S}$ is the divisible sandpile cluster on $\SG$. By \cite[Theorem 1.1]{sandpile-sg-huss-sava}, the shape of the sandpile cluster $\mathcal{S}$ when starting with mass
$|B_o(n)|$  at $o$ is given by $B_o(n-1)\subseteq \mathcal{S}\subseteq B_o(n)$.  The difference is there that the balls are closed instead of open, therefore translating \cite[Theorem 1.1]{sandpile-sg-huss-sava} in our setting of open balls, we have $B_o(n)\subseteq \mathcal{S}\subseteq B_o(n+1)$.

Define the function $k_n: B_o(n+1)\to \mathbb{R}$ by $k_n(z):=u(z)-h_n(z)$, which is harmonic  on $B_o(n)$. We have
\begin{equation*}
 \begin{cases}
   \Delta k_n(z) & =  0,\quad z\in B_o(n)\\
    k_n(z)         & =  u(z)\geq 0,\quad z\in \partial B_o(n).
 \end{cases}
\end{equation*}
Applying both the minimum and the maximum principle to $k_n$, if 
$\max_{z\in \partial B_o(n)}k_n(z)=c\geq 0$, then we obtain $u(z)-c<h_n(z)<u(z)$ for all $z\in B_o(n)$. The odometer function $u(z)$ is strictly positive on $\mathcal{S}$ and decreasing in the distance from $z$ to the origin $o$. This means that there exists a constant $c'$, such that $u(z)>c$ on $B_o(n-c')$, which implies that $h_n(z)>0$ on $B_o(n-c')$. For $n$ large enough $ B_o(n(1-\epsilon))\subset B_o(n-c')$, and this yields $h_n(z)>0$ on $B_o(n(1-\epsilon))$ as well, which proves the claim.
\end{proof}
\end{lem}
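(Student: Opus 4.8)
The plan is to prove that the function
\[
h_n(z) := |B_o(n)|\,g_n(o,z) - \sum_{y\in B_o(n)} g_n(y,z)
\]
is nonnegative on $B_o(n(1-\epsilon))$ for all large $n$; this is exactly the asserted inequality. The idea is to recognize $h_n$ as the solution of an explicit Dirichlet problem on $\SG$ and to compare it with the odometer of a divisible sandpile whose shape and radial monotonicity on $\SG$ are already known from \cite{sandpile-sg-huss-sava}.

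First I would identify the equation solved by $h_n$. Viewing $g_n(x,\cdot)$ as a function on $B_o(n)$ that vanishes outside $B_o(n)$, one has $\Delta g_n(x,\cdot) = -\delta_x$ on $B_o(n)$; summing over $x$ and using linearity of $\Delta$ gives $\Delta h_n = 1 - |B_o(n)|\,\delta_o$ on $B_o(n)$, with $h_n \equiv 0$ on $B_o(n)^c$. Next I would invoke the divisible sandpile started with mass $|B_o(n)|$ at $o$: its odometer $u$ solves the very same Dirichlet problem, only posed on the sandpile cluster $\mathcal{S}$ rather than on $B_o(n)$, and satisfies $u\ge 0$ everywhere, $u>0$ exactly on $\mathcal{S}$, and $u$ radially decreasing. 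By \cite[Theorem 1.1]{sandpile-sg-huss-sava} (after translating closed balls to open ones), $B_o(n)\subseteq \mathcal{S}\subseteq B_o(n+1)$.

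Then comes the comparison step. Set $k_n := u - h_n$. The two Laplacians cancel, so $k_n$ is harmonic on $B_o(n)$, and on $\partial B_o(n)$ it equals $u\ge 0$. The minimum and maximum principles then yield $0 \le k_n \le c$ on $B_o(n)$, where $c := \max_{z\in\partial B_o(n)} u(z)$, hence $u - c \le h_n \le u$ on $B_o(n)$. It therefore suffices to show that $u > c$ on $B_o(n(1-\epsilon))$ for $n$ large. Since $u$ is strictly positive throughout $B_o(n)$, radially decreasing, and the one-step decrements of $u$ near $\partial\mathcal{S}$ (and hence $c$) stay bounded, there is a constant $c'$ with $u > c$ on $B_o(n-c')$; once $n\epsilon > c'$ we get $B_o(n(1-\epsilon))\subseteq B_o(n-c')$, and so $h_n>0$ there.

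I expect the last point — the uniformity in $n$ — to be the main obstacle: one must know that the ``collar'' near $\partial B_o(n)$ on which the comparison is lossy has width bounded (or at least $o(n)$) independently of $n$. This is precisely where the explicit description of the divisible sandpile odometer on $\SG$ from \cite{sandpile-sg-huss-sava}, in particular the boundedness of $\max_{\partial B_o(n)} u$ and the quantitative radial monotonicity afforded by the self-similarity of $\SG$, is indispensable; on graphs lacking such fine potential theory this step would not be available, which is why the classical Green-function computation of \cite{lawler_bramson_griffeath} is replaced here by a sandpile comparison.
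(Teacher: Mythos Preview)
Your proposal is correct and follows essentially the same route as the paper's proof: define $h_n$, identify the Dirichlet problem it solves, compare with the divisible sandpile odometer $u$ via the harmonic function $k_n=u-h_n$, and use the maximum/minimum principle together with the radial monotonicity and shape result for $u$ from \cite{sandpile-sg-huss-sava} to conclude. You even flag more explicitly than the paper the one delicate point, namely that the constant $c'$ (the width of the collar where the comparison is lossy) must be independent of $n$; the paper asserts this without further comment, relying implicitly on the explicit odometer description on $\SG$.
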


We  prove next that for every $\epsilon>0$,
\begin{equation}\label{eq:in_bd_equiv}
B_o\left(n(1-\epsilon)\right)\subset \mathcal{I}\left( b_n(1+\epsilon) \right), \text{ for all sufficiently large $n$},
\end{equation}
with probability $1$. Recall that $b_n=|B_o(n)|$. Taking the intersection on both sides over all $\epsilon'<\epsilon$, we get the inner bound $B_o\left(n(1-\epsilon)\right) \subset \mathcal{I}(b_n)$ in Theorem \ref{thm:idla-sierpinski-gasket}.
By Borel-Cantelli, a sufficient condition for \eqref{eq:in_bd_equiv} is
\begin{equation}\label{eq:bc_equiv}
\sum_n \sum_{z\in B_o\left(n(1-\epsilon)\right)}\mathbb{P}[E_z(b_n(1+\epsilon))]<\infty,
\end{equation}
where
\begin{equation*}
E_z(b_n(1+\epsilon)):=\{z\notin \mathcal{I}( b_n(1+\epsilon))\}
\end{equation*}
is the event that $z$ does not belong to the IDLA cluster $\mathcal{I}( b_n(1+\epsilon))$.
We want to show that the probability of this event decreases exponentially in $n$.
Let us first fix $z\in B_o\left(n(1-\epsilon)\right)$ and look at the first $ b_n(1+\epsilon) $ random walks $(X^i(t))_{t\geq 0}$
and $i=1,2,\ldots , b_n(1+\epsilon)$ which build the IDLA cluster. We let these $ b_n(1+\epsilon) $
walks evolve forever, even after they have left the IDLA cluster.
For bounding $\mathbb{P}[E_z(b_n(1+\epsilon))]$ we use the same approach as in \cite{lawler_bramson_griffeath}.
Let $M$ be the number of walks that visit $z$ before exiting the ball $B_o(n)$.
Furthermore, let $L$ be the number of walks that visit $z$ before exiting the ball $B_o(n)$
but after leaving the occupied IDLA cluster.  We have 
\begin{equation*}
E_z(b_n(1+\epsilon)) \subset \{M=L\}.
\end{equation*}
Then for any given $a \geq 0$
\begin{equation}\label{eq:up-bd-ez}
\mathbb{P}[E_z(b_n(1+\epsilon))]<\mathbb{P}[M=L]\leq \mathbb{P}[M\leq a \text{ or } L\geq a]\leq \mathbb{P}[M\leq a]+\mathbb{P}[L\geq a].
\end{equation}
We choose $a$ later so that the above two probabilities can be made sufficiently small. We have to show that $M$ includes more walks on average
while $L$ includes fewer terms. 
Consider the following stopping times:
\begin{align*}
\sigma^i & =\inf\big\{t>0:X^i(t)\notin \mathcal{I}(i-1)\big\}.\\
               & = \text{ the time it takes the $i$-th walk to leave the IDLA cluster}\\
 \tau_n^i & = \inf\{t>0: X^i(t)\notin B_o(n)\}\\
 			    & = \text{ the time it takes the $i$-th walk to leave the ball } B_o(n)\\
 \tau_z^i & = \inf\{t>0: X^i(t)=z\}\\
 				&  = \text{ the time it takes the $i$-th walk to hit $z$}.           
\end{align*}
Recall that all particles start their journey at the fixed origin $o\in\SG$.
In terms of these stopping times we can write
\begin{equation*}
M=\sum_{i=1}^{b_n(1+\epsilon)}\mathbf{1}_{\left\{\tau_z^i<\tau^i_n\right\}} \quad \text{ and } \quad 
\mathbb{E}[M]= b_n(1+\epsilon)\mathbb{P}_o[\tau_z<\tau_n],
\end{equation*}
since the summands in $M$ are iid. On the other hand
\begin{equation*}
L=\sum_{i=1}^{ b_n(1+\epsilon)}\mathbf{1}_{\left\{\sigma^i<\tau_z^i<\tau^i_n\right\}},
\end{equation*}
but the summands in $L$ are not identically distributed and not independent, since after each walk exits the IDLA
cluster, the shape of the cluster is modified. Thus $\mathbb{E}[L]$ is hard to determine, but a good upper
bound for it would suffice. Note that only those walks that exit the IDLA cluster inside the ball $B_o(n)$
contribute to $L$, and for each $y\in B_o(n)$ there is at most one index $i$ for which $X^i(\sigma^i)=y$. Then the walks
started at $y$ that hit $z$ after leaving the ball $B_o(n)$ are independent. So in order to get rid of the dependence of the summands in $L$,
we enlarge the index to all $y\in B_o(n)$, start a random walk at $y$, and look if this walk visits $z$ before leaving $B_o(n)$.
That is, if we let
\begin{equation*}
\tilde{L}=\sum_{y\in B_o(n)}\mathbf{1}^y_{\left\{\tau_z<\tau_n\right\}},
\end{equation*}
where the indicators $\mathbf{1}^y$ correspond to independent random walks starting at $y$, we have that 
$L\leq \tilde{L}$ and 
\begin{equation*}
\mathbb{E}[\tilde{L}]=\sum_{y\in B_o(n)} \mathbb{P}_y[\tau_z<\tau_n].
\end{equation*}
The next step is to use a large deviation result in order to bound the sum of a large number of independent random variables,
but we need to know more about $\mathbb{E}[M]$, $\mathbb{E}[\tilde{L}]$, and the relationship between them. 
Using equation \eqref{eq:st_gr_fc} together with the symmetry of the stopped Green function, we have
\begin{align*}
\mathbb{E}[M] & = \lfloor b_n(1+\epsilon)\rfloor\dfrac{g_n(o,z)}{g_n(z,z)}\\
\mathbb{E}\left[\tilde{L}\right] & =  \dfrac{1}{g_n(z,z)}\sum_{y\in B_o(n)}g_n(y,z)=\dfrac{1}{g_n(z,z)}\sum_{y\in B_o(n)}g_n(z,y)=\dfrac{1}{g_n(z,z)}\mathbb{E}_z[\tau_n]
\end{align*}
Then, by Lemma \ref{lem:mean-val-gr-fc}, we can write
\begin{align}\label{eq:e_m_e_l}
 \mathbb{E}[M]\geq \left(1+\frac{\epsilon}{2}\right)\frac{b_n g_n(o,z)}{g_n(z, z)}\geq
 \left(1+\frac{\epsilon}{2}\right) \dfrac{\sum_{y\in B_o(n)}g_n(y,z)}{g_n(z, z)}=\left(1+\frac{\epsilon}{2}\right) \mathbb{E}[\tilde{L}].
\end{align}

\begin{lem}\label{lem:e-l-bound}
The expectation of the random variable $\tilde{L}$ can be bounded from below by
\begin{equation*}
 \mathbb{E}\left[\tilde{L}\right]\geq c' n^{\alpha},
\end{equation*}
where $c'>0$ depends on nothing but $\epsilon$.
\end{lem}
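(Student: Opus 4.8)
The plan is to simply combine the identity for $\mathbb{E}[\tilde L]$ derived immediately before the lemma with the two estimates already established in Section~\ref{sec:prelim-sg}. Recall that we showed
\[
\mathbb{E}\left[\tilde L\right] = \frac{1}{g_n(z,z)}\,\mathbb{E}_z[\tau_n],
\]
so it suffices to bound the numerator below and the denominator above.

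For the numerator, since $z \in B_o\left(n(1-\epsilon)\right)$ by assumption, Lemma~\ref{lem:exit-time-lower-bound} gives a constant $c = c(\epsilon) > 0$, depending only on $\epsilon$, with $\mathbb{E}_z[\tau_n] \geq c\, n^{\beta}$. For the denominator, Lemma~\ref{lem:up_bd_gn} applied with $x = z$ yields a constant $C > 0$, independent of $n$ and of $z \in B_o(n)$, with $g_n(z,z) \leq C\, n^{\beta-\alpha}$. Dividing,
\[
\mathbb{E}\left[\tilde L\right] \;=\; \frac{\mathbb{E}_z[\tau_n]}{g_n(z,z)} \;\geq\; \frac{c\, n^{\beta}}{C\, n^{\beta-\alpha}} \;=\; \frac{c}{C}\, n^{\alpha},
\]
and setting $c' := c/C$ finishes the proof; note that $c'$ depends only on $\epsilon$, as the upper bound on $g_n(z,z)$ is uniform.

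There is essentially no obstacle here: all the real work has been front-loaded into Lemma~\ref{lem:exit-time-lower-bound} (the subdiffusive lower bound on the exit time, which in turn rests on $(E_\beta)$) and Lemma~\ref{lem:up_bd_gn} (the effective-resistance bound on the stopped Green function). The one point worth stating explicitly is that the constraint $z \in B_o\left(n(1-\epsilon)\right)$ is exactly what is needed to keep $z$ away from $\partial B_o(n)$ so that Lemma~\ref{lem:exit-time-lower-bound} applies with a constant depending only on $\epsilon$; this is why the same hypothesis reappears here and will propagate through the Borel--Cantelli argument in \eqref{eq:bc_equiv}.
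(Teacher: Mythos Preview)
Your proof is correct and essentially identical to the paper's own proof: both use the identity $\mathbb{E}[\tilde L] = \mathbb{E}_z[\tau_n]/g_n(z,z)$, then apply Lemma~\ref{lem:exit-time-lower-bound} for the numerator and Lemma~\ref{lem:up_bd_gn} for the denominator, setting $c' = c/C$.
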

\begin{proof}
Recall that $\mathbb{E}[\tilde{L}]=\dfrac{1}{g_n(z,z)}\mathbb{E}_z[\tau_n]$. From 
Lemma \ref{lem:exit-time-lower-bound} we know that there exists $c=c(\epsilon)>0$ such that $\mathbb{E}_z[\tau_n(o)]\geq cn^{\beta}$, and Lemma 
\ref{lem:up_bd_gn} gives the bound $g_n(z,z)\leq Cn^{\beta-\alpha}$, for $c$ and $C$ both being positive constants. Putting these two relations together, for $c'=\frac{c}{C}>0 $ we get the claim. 
\end{proof}
On account of \eqref{eq:e_m_e_l}, one has the same lower bound for $\mathbb{E}[M]$.
We shall use the following large deviation estimate for sums of independent indicator random variables. For a proof see \cite[Lemma 4]{lawler_bramson_griffeath}.
\begin{lem}\label{lem:dev-sum-indicators}
Let $S$ be a finite sum of independent indicator random variables with mean $\mu$.
For any $0<\gamma<1/2$, and for all sufficiently large $\mu$,
\begin{equation*}
 \mathbb{P}\left[\left|S-\mu\right|\geq \mu^{1/2+\gamma}\right]\leq 2 \exp\left\{-\frac{1}{4}\mu^{2\gamma}\right\}.
\end{equation*}
\end{lem}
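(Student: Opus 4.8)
The plan is to derive both tails from the standard exponential Markov inequality (Chernoff bound) and then optimize. Write $S=\sum_{i=1}^{N}Y_i$ with the $Y_i$ independent, $\mathbb{P}[Y_i=1]=p_i=1-\mathbb{P}[Y_i=0]$, so $\mu=\sum_{i=1}^N p_i$. Put $\lambda:=\mu^{1/2+\gamma}$ and $\delta:=\lambda/\mu=\mu^{\gamma-1/2}$, so that $0<\delta<1$ once $\mu>1$ and $\delta\to 0$ as $\mu\to\infty$. For the upper tail, for every $t>0$ Markov's inequality applied to $e^{tS}$, together with independence and the elementary bound $1+x\leq e^x$, gives
\begin{equation*}
\mathbb{P}[S\geq \mu+\lambda]\leq e^{-t(\mu+\lambda)}\prod_{i=1}^{N}\bigl(1+p_i(e^t-1)\bigr)\leq \exp\bigl(-t(\mu+\lambda)+\mu(e^t-1)\bigr).
\end{equation*}
Choosing $t=\log(1+\delta)$ to minimize the exponent yields
\begin{equation*}
\mathbb{P}[S\geq \mu+\lambda]\leq \exp\bigl(-\mu\,\varphi_+(\delta)\bigr),\qquad \varphi_+(\delta):=(1+\delta)\log(1+\delta)-\delta.
\end{equation*}

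A parallel computation, applying Markov's inequality to $e^{-tS}$ with $t=-\log(1-\delta)>0$, bounds the lower tail by
\begin{equation*}
\mathbb{P}[S\leq \mu-\lambda]\leq \exp\bigl(-\mu\,\varphi_-(\delta)\bigr),\qquad \varphi_-(\delta):=(1-\delta)\log(1-\delta)+\delta.
\end{equation*}
It then only remains to bound $\varphi_\pm(\delta)$ from below. Expanding in power series one checks $\varphi_+(\delta)=\sum_{k\geq 2}\tfrac{(-1)^k}{k(k-1)}\delta^k$, an alternating series with decreasing terms for $0<\delta<1$, so $\varphi_+(\delta)\geq \tfrac{\delta^2}{2}-\tfrac{\delta^3}{6}\geq \tfrac{\delta^2}{4}$, and similarly $\varphi_-(\delta)=\sum_{k\geq 2}\tfrac{\delta^k}{k(k-1)}\geq \tfrac{\delta^2}{2}\geq \tfrac{\delta^2}{4}$. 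Since $\mu\delta^2=\mu\cdot\mu^{2\gamma-1}=\mu^{2\gamma}$, each tail is at most $\exp(-\tfrac14\mu^{2\gamma})$, and summing the two estimates proves the claim for all $\mu$ with $\delta<1$, i.e. all sufficiently large $\mu$.

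There is no genuine obstacle here; this is a routine large-deviation computation. The only place requiring any care is the final step, where one must verify that the rate functions $\varphi_\pm(\delta)$ dominate $\delta^2/4$ on the relevant range of $\delta$ — this is precisely where the constant $\tfrac14$ (rather than the asymptotically sharp $\tfrac12$) is used, so that the higher-order corrections in the Taylor expansion are absorbed once $\mu$ is large. Alternatively, one may simply invoke the textbook multiplicative Chernoff bounds $\mathbb{P}[S\geq(1+\delta)\mu]\leq e^{-\mu\delta^2/3}$ and $\mathbb{P}[S\leq(1-\delta)\mu]\leq e^{-\mu\delta^2/2}$ and cite a standard reference, exactly as is done in \cite[Lemma 4]{lawler_bramson_griffeath}.
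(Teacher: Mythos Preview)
Your proof is correct; it is the standard Chernoff-bound computation and every step checks out. The paper itself does not give a proof of this lemma at all: it simply refers the reader to \cite[Lemma~4]{lawler_bramson_griffeath}, so your argument is more detailed than what the paper supplies (and, as you note at the end, recovers exactly the cited result).
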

Since both $M$ and $\tilde{L}$ are finite sums of indicator random variables, we can apply the above Lemma to both of them.
Recall that we want to choose a number $a$ such that the probabilities $\mathbb{P}[M\leq a]$ and $\mathbb{P}[\tilde{L}\geq a]$ can be made sufficiently small.

\begin{proof}[Proof of the inner bound in Theorem \ref{thm:idla-sierpinski-gasket}]
Recall that in order to prove that $B(n(1-\epsilon))\subset \mathcal{I}(b_n(1+\epsilon))$, it is sufficient to upper bound the probabilities $\mathbb{P}[M\leq a]$ and $\mathbb{P}[\tilde{L}\geq a]$ and to show that they are summable over $n$ and $z\in B_o\left(n(1-\epsilon)\right)$.
Let us choose $a=\big(1+\frac{\epsilon}{4}\big)\mathbb{E}\left[\tilde{L}\right]$ and $\gamma=\frac{1}{4}$, and let us first show that for $\epsilon >0$ and $n$ large enough we have
\begin{equation}\label{eq:elm-inner-bd}
 \mathbb{E}\left[\tilde{L}\right]+\mathbb{E}\left[\tilde{L}\right]^{1/2+\gamma}\leq \underbrace{\Big(1+\frac{\epsilon}{4}\Big)\mathbb{E}\left[\tilde{L}\right]}_\text{$=a$}\leq
 \mathbb{E}[M]-\mathbb{E}[M]^{1/2+\gamma}.
\end{equation}
The first inequality in \eqref{eq:elm-inner-bd} comes from Lemma \ref{lem:e-l-bound}:
\begin{equation*}
 \mathbb{E}\left[\tilde{L}\right]\Big(1+\mathbb{E}\left[\tilde{L}\right]^{-1/4}\Big)\leq  \mathbb{E}\left[\tilde{L}\right] \Big(1+\frac{1}{c'n^{\alpha/4}}\Big)\leq \Big(1+\frac{\epsilon}{4}\Big
 )\mathbb{E}\left[\tilde{L}\right]=a, \text{ for $n$ large enough}.
\end{equation*}
The second inequality in \eqref{eq:elm-inner-bd} is obtained using equation \eqref{eq:e_m_e_l} and Lemma \ref{lem:e-l-bound}:
\begin{align*}
 \mathbb{E}[M]\Big(1-\mathbb{E}[M]^{-1/4}\Big) & \geq \Big(1+\frac{\epsilon}{2}\Big)\mathbb{E}\left[\tilde{L}\right]\Bigg(1-\dfrac{1}{(1+\frac{\epsilon}{2})^{1/4}}\mathbb{E}\left[\tilde{L}\right]^{-1/4}\Bigg)\\
 &\geq \Big(1+\frac{\epsilon}{2}\Big)\mathbb{E}\left[\tilde{L}\right]\Big(1-\mathbb{E}\left[\tilde{L}\right]^{-1/4}\Big)\\
 & = \Big(1+\frac{\epsilon}{4}\Big)\mathbb{E}\left[\tilde{L}\right]+\frac{\epsilon}{4}\mathbb{E}\left[\tilde{L}\right]\Big(1-\frac{4+2\epsilon}{\epsilon}\mathbb{E}\left[\tilde{L}\right]^{-1/4}\Big)\\
 &\geq \Big(1+\frac{\epsilon}{4}\Big)\mathbb{E}\left[\tilde{L}\right]+\frac{\epsilon}{4}\mathbb{E}\left[\tilde{L}\right]\Big(1-\frac{4+2\epsilon}{\epsilon}\frac{1}{c'n^{\alpha/4}}\Big) \geq \Big(1+\frac{\epsilon}{4}\Big)\mathbb{E}\left[\tilde{L}\right],  \text{ for $n$ large enough}.
\end{align*}
The last inequality follows from the fact that for $n$ sufficiently large the quantity $\Big(1-\frac{4+2\epsilon}{\epsilon}\frac{1}{cn^{\alpha/4}}\Big)$ is greater than $0$ and $\mathbb{E}[\tilde{L}]>0$. Therefore we have proved \eqref{eq:elm-inner-bd}.
Thus, recalling that we defined $a=\big(1+\frac{\epsilon}{4}\big)\mathbb{E}[\tilde{L}]$
and $1/2+\gamma=3/4$
\begin{equation*}
 \mathbb{P}\left[\tilde{L}\geq a\right]\leq \mathbb{P}\Big[\tilde{L}\geq \mathbb{E}[\tilde{L}]+\mathbb{E}[\tilde{L}]^{3/4}\Big]\leq 2\exp\Big\{-\frac{1}{4}\mathbb{E}[\tilde{L}]^{1/2} \Big\}\leq \exp\Big\{-cn^{\frac{\alpha}{2}}\Big\}.
\end{equation*}
where $c=c(\epsilon)>0$ is independent of $n$. The last two inequalities above follow from Lemma \ref{lem:dev-sum-indicators} and Lemma \ref{lem:e-l-bound}. Similarly for $M$, we use Lemma \ref{lem:dev-sum-indicators}, \eqref{eq:e_m_e_l}, and Lemma \ref{lem:e-l-bound} to obtain
\begin{align*}
 \mathbb{P}[M\leq a] & \leq \mathbb{P}\Big[M\leq \mathbb{E}[M]-\mathbb{E}[M]^{3/4}\Big]
 \leq 2\exp\Big\{-\frac{1}{4}\mathbb{E}[M]^{1/2} \Big\}\\
  & \leq 2\exp\Big\{-\frac{1}{4}\left(1+\frac{\epsilon}{2}\right)^{1/2}\mathbb{E}[\tilde{L}]^{1/2} \Big\}\leq 
 2\exp\Big\{-\frac{1}{4}\mathbb{E}[\tilde{L}]^{1/2} \Big\}\leq
 \exp\Big\{-cn^{\frac{\alpha}{2}}\Big\}.
 \end{align*}
Putting together the previous two inequalities and using \eqref{eq:vg}, we have that for $\epsilon>0$ there exists $n_\epsilon\in \mathbb{N}$ such that for all $n\geq n_{\epsilon}$,
\begin{align*}
 \sum_{n\geq n_{\epsilon}}\sum_{z\in B_o\left(n(1-\epsilon)\right)}\mathbb{P}[z\notin \mathcal{I}(b_n(1+\epsilon))]
 & \leq \sum_{n\geq n_{\epsilon}}\sum_{z\in B_o\left(n(1-\epsilon)\right)}\left(\mathbb{P}[M\leq a]+\mathbb{P}[\tilde{L}\geq a]\right)\\
 & \leq  \sum_{n\geq n_{\epsilon}}\sum_{z\in B_o\left(n(1-\epsilon)\right)}2\exp\Big\{-cn^{\frac{\alpha}{2}}\Big\}\\
 & \leq \sum_{n\geq n_{\epsilon}} c n^{\alpha}\exp\big\{-n^{\frac{\alpha}{2}}\big\}<\infty,
 \end{align*}
with $\alpha= \frac{\log{3}}{\log{2}}$. By the Borel-Cantelli Lemma, we have proved that for $\epsilon>0$
\begin{equation*}
 B_o(n(1-\epsilon))\subset \mathcal{I}(b_n(1+\epsilon)), \text{ for $n$ large}
\end{equation*}
with probability $1$, and this implies the inner bound in Theorem \ref{thm:idla-sierpinski-gasket}.
\end{proof}

\subsection{The outer bound for IDLA cluster}
\label{sec:idla-outer-bd}

In order to prove the outer bound $\mathcal{I}(b_n)\subset B_o(n(1+\epsilon))$
in Theorem \ref{thm:idla-sierpinski-gasket}, we shall use parts of the main result of
\cite{idla-outerbd-dum-cop}. More precisely, in well-behaved environments, once a good inner bound is obtained,  we can control the number of particles not contained in the inner bound and obtain a good upper bound, as a Corollary of \cite[Theorem 1.2]{idla-outerbd-dum-cop}. 

In order to be able to use \cite[Theorem 1.2]{idla-outerbd-dum-cop} and
\cite[Corollary 1.3]{idla-outerbd-dum-cop}, one should check that the conditions required in these two results are fulfilled when we perform IDLA on $\SG$. 
Not all the conditions required there are immediately available on $\SG$, but the estimates on the stopped Green function are enough for our purposes. Before going into details about the conditions required, we shall first set the notation about stopped IDLA clusters. We use the same notation as in \cite{idla-outerbd-dum-cop}.

Let $S\subset \SG$ be a finite subset of $\SG$. In order to define the stopped IDLA clusters, we first define the aggregate $\I(S;x)$ when we start with an existing finite cluster $S$, and run a simple random walk  $X(t)$ 
starting at some vertex $x\in S$ until it exits $S$.  
Let $\sigma_S$ be the first time when $X(t)$ exits $S$. Then define
\begin{equation*}
\I(S;x):=S\cup \{X(\sigma_S)\}.
\end{equation*}
For the outer boundary, we will need a slightly more general process, where the growth of the IDLA cluster is stopped before exiting slightly bigger balls. For some radius $r>0$ such that $S\subset B_o(r)$, denote by $\I(S;x\mapsto B_o(r))$ the cluster which is obtained as follows. For some $x\in S$
start a simple random walk $X(t)$ at $x$ and let it run until it either exits $S$ or reaches  $B_o(r)^c$. If $\sigma_S$ is as above, and $\tau_r=\tau_r(o)$ is the first time the random walk exits $B_o(r)$ as defined in \eqref{eq:exit-time}, 
\begin{equation*}
\I\big(S;x\mapsto B_o(r)\big):=S\cup\left\{X(\sigma_S\wedge(\tau_r-1))\right\}.
\end{equation*}
We need to keep track of the paused particles, and their positions on $B_o(r)^c$; the paused particles will all be at distance $r$ from the origin. Define
\begin{equation*}
P(S;x\mapsto B_o(r))=
\begin{cases}
X(\tau_r), & \text{ if } \tau_r\leq \sigma_S\\
\perp,     & \text{otherwise},
\end{cases}
\end{equation*}
where $\perp$ indicates that the random walk attached to the existing aggregate $S$ before exiting the ball $B_o(r)$, so there is no particle to be paused.
For vertices $x_1,x_2,\ldots,x_k$ in $\SG$, a set $S\subset \SG$, and a ball $B_o(r)$, define
$\I\big(S;x_1,x_2,\ldots,x_k \mapsto B_o(r)\big)$ to be the IDLA cluster when starting with the occupied set $S$ and $k$ random walks with starting points $x_1,x_2,\ldots,x_k$, respectively, and paused upon exiting $B_o(r)$. One can then inductively define $\I\big(S;x_1,x_2,\ldots,x_k \mapsto B_o(r)\big)$  by taking 
$$
S_0=S, \quad S_{j}=\I\big(S_{j-1};x_j\mapsto B_o(r)\big)\quad  \text{for} \quad j\in \{1,\ldots,k\},
$$
and $\I\big(S;x_1,x_2,\ldots,x_k \mapsto B_o(r)\big)=S_k$. Since some of the $k$ particles may be stopped on $B_o(r)^c$ before attaching to the existing cluster, we will keep track of these particles in the following way. Define 
$P(S; x_1,\ldots,x_k\mapsto B_o(r))$ to be the sequence of the paused particle in the process above.
More precisely, if $p_j=P(S_{j-1};x_j\mapsto B_o(r))$ for $j\in \{1,\ldots,k\}$, then $P(S;x_1,\ldots,x_k\mapsto B_o(r))$ is just the sequence $(p_j: p_j\neq \perp)$. If particles are not paused before exiting some ball, then the aggregate is simply denoted by $\I(S;x_1,\ldots,x_k)$.
The reason for working with the paused IDLA process, before exiting bigger and bigger balls is because the IDLA process possesses the \textit{abelian property}: the unstopped cluster $\I(S;x_1,\ldots,x_k)$
has the same distribution as 
\begin{equation}\label{eq:idla-abelian-prop}
\I\big(\I(S;x_1,\ldots,x_k\mapsto B_o(r)); P(S;x_1,\ldots,x_k \mapsto B_o(r))\big).
\end{equation}
For details on this property see \cite{diaconis_fulton_1991,lawler_bramson_griffeath}.  
As defined in Section \ref{sec:intro}, the IDLA cluster $\I(n)$ is built by starting $n$ particles at the origin, and letting them run until exiting the previously occupied cluster. We are interested in the shape $\I(b_n)$ of IDLA cluster when we start $b_n=|B_o(n)|$ particles at the origin which is a special case of the stopped process defined above. Nevertheless, in proving the outer bound in Theorem  \ref{thm:idla-sierpinski-gasket}, the general stopped process will be used. In terms of the stopped process defined above, we have
\begin{equation*}
\I(n)=\I\big(\emptyset;\underbrace{o,\ldots,o}_{\text{$n$ times}}\big)
\end{equation*}
and set also
\begin{equation*}
\I_n(x\mapsto r):=\I\big(\emptyset;\underbrace{x,\ldots,x}_{\text{$n$ times}}\mapsto B_x(r)\big)
\end{equation*}
and
\begin{equation*}
P_n(x\mapsto r):=P\big(\emptyset;\underbrace{x,\ldots,x}_{\text{$n$ times}}\mapsto B_x(r)\big).
\end{equation*}
Having set the notations, our aim is to prove a similar result to \cite[Theorem 1.2]{idla-outerbd-dum-cop}. The condition \textbf{weak lower bound} (wLB) from \cite{idla-outerbd-dum-cop}, which says that when releasing $|B_x(n)|$ particles at $x$, the IDLA cluster contains $B_x(n)$ with noticeable probability, does not hold for every starting point $x$. We have such a lower bound on the IDLA cluster in Theorem \ref{thm:idla-sierpinski-gasket}, only when we start $b_n=|B_n(o)|$ particles at the origin $o\in \SG$. We believe that one should be able to adapt the proof of the inner bound for the IDLA cluster when releasing particles at vertices $x\in \SG$ other than $o$, and to get the same result with some additional technical difficulties on Green function estimates and expected exit time from balls for the random walks. Nevertheless, we are not going to do this here. The assumption (wLB) in \cite{idla-outerbd-dum-cop}, which is used only in \cite[Lemma 3.2]{idla-outerbd-dum-cop} will not be needed in our case, because we give a different proof of this Lemma. Our proof uses finer estimates on the stopped Green function, estimates which are not available in the general setting of \cite{idla-outerbd-dum-cop}. Our approach to this result is similar to the one in \cite[Lemma 11]{lawler_1995}. 

The \textbf{continuity assumption } $(C)$ \cite[Page 4/8]{idla-outerbd-dum-cop} 
holds automatically, since the balls we work with are considered with respect to the graph metric, that is, $\rho$ and $d_G$ coincide in our case. Moreover,  \textbf{the regular volume growth} (VG)
condition holds for any radius $r$ and center $x$: $|B_x(r)|$ has growth of order $\alpha$, as defined in \eqref{eq:vg}. The fractal growth $\alpha$ as defined in \eqref{eq:alpha-beta}
will play the role of $d$ in \cite{idla-outerbd-dum-cop}. 
We next prove an estimate for the infimum of the stopped Green function. 

\begin{lem}\label{lem:inf-gr-fr}
Let $u\in (0,1)$. There exists $c=c(u)>0$ such that for all sufficiently large radii $r>0$ and all $x\in \SG$
\begin{equation}\label{eq:greenlower}
\inf_{y\in B_x(ur)}g_r(x,y)\geq c r^{\beta-\alpha},
\end{equation}
where $g_r(x,y)$ is the stopped Green function as defined in \eqref{eq:stopped-green}.
\end{lem}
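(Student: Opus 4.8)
The plan is to obtain the lower bound on $\inf_{y\in B_x(ur)}g_r(x,y)$ by combining the already-established matching order-of-magnitude estimate $g_r(x,x)\asymp r^{\beta-\alpha}$ (from Lemma \ref{lem:up_bd_gn} together with the lower bound derived for $g_n(o,o)$, whose proof only used translation-invariant inputs and hence applies at any center $x$) with a Harnack-type comparison between $g_r(x,\cdot)$ and its value at $x$. First I would observe, via the identity \eqref{eq:st_gr_fc}, that $g_r(x,y)=\mathbb{P}_x[\tau_y<\tau_r]\,g_r(y,y)$, so it suffices to bound below the hitting probability $\mathbb{P}_x[\tau_y<\tau_r]$ uniformly for $y\in B_x(ur)$, and then multiply by the lower bound $g_r(y,y)\geq c\,r^{\beta-\alpha}$, which again holds at the arbitrary center $y$ since $B_y(d(y,\cdot))$ has volume growth of exponent $\alpha$ and the effective-resistance computation in Lemma \ref{lem:up_bd_gn} is center-independent. (One must be slightly careful that $g_r(y,y)$ here is the Green function stopped on exiting $B_x(r)$, not $B_y(r)$; but since $y\in B_x(ur)$ with $u<1$, the ball $B_y((1-u)r)$ is contained in $B_x(r)$, so $g_r(y,y)\geq g_{B_y((1-u)r)}(y,y)\asymp ((1-u)r)^{\beta-\alpha}$ by the same resistance argument.)

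The core step is the uniform lower bound on $\mathbb{P}_x[\tau_y<\tau_r]$ for $y\in B_x(ur)$. I would fix $y=x$ trivially; for $y\neq x$, the cleanest route is the elliptic Harnack inequality. The function $z\mapsto g_r(x,z)$ is nonnegative on $B_x(r)$ and harmonic on $B_x(r)\setminus\{x\}$ (its Laplacian is $-\delta_x$), vanishing on $B_x(r)^c$. To apply \eqref{eq:EHI} one wants a function harmonic on a full ball; I would instead chain the Harnack inequality along a sequence of overlapping balls of comparable radii joining a fixed point near $x$ (say a neighbor $x'$ of $x$, where $g_r(x,x')\asymp r^{\beta-\alpha}$ since $g_r(x,x')=\mathbb{P}_{x'}[\tau_x<\tau_r]\,g_r(x,x)$ and $\mathbb{P}_{x'}[\tau_x<\tau_r]\geq p_0$ by the $(p_0)$ condition plus recurrence-type estimates) to the target $y$, staying inside $B_x(r)$ and away from the singularity $x$. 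Since $d(x,y)<ur$ with $u<1$ fixed, the number of balls needed in such a Harnack chain is bounded by a constant depending only on $u$ (using volume doubling \eqref{eq:vd} and that $\SG$ has bounded geometry), and the radius of each ball can be taken proportional to $r$ but small enough that the doubled ball avoids $x$; hence the accumulated Harnack constant is $C^{N(u)}$, a constant depending only on $u$. This yields $g_r(x,y)\geq c(u)\,g_r(x,x')\geq c'(u)\,r^{\beta-\alpha}$.

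The main obstacle I anticipate is handling the singularity of $g_r(x,\cdot)$ at the pole $x$: the Harnack inequality only applies to functions harmonic on a genuine ball, so the chain of balls must be routed so that no ball (nor its double, as required in Definition \ref{def:ehi}) contains $x$, while still connecting a near-$x$ anchor point to every $y\in B_x(ur)$ and staying within $B_x(r)$. On $\SG$ this is geometrically feasible because the graph is connected with bounded degree and one can always detour a bounded distance around a single vertex, but making the bound on the number of chain links depend only on $u$ (and not on $r$ or $x$) requires the uniform volume growth \eqref{eq:vg} and the finitely-ramified self-similar structure. A secondary technical point is the anchor estimate $g_r(x,x')\asymp r^{\beta-\alpha}$ at a neighbor $x'$ of $x$: this follows from $g_r(x,x')=\mathbb{P}_{x'}[\tau_x<\tau_r]g_r(x,x)$, the lower bound $g_r(x,x)\geq c r^{\beta-\alpha}$, and $\mathbb{P}_{x'}[\tau_x<\tau_r]\geq\mathbb{P}_{x'}[\tau_x<\tau_1(x')]=p(x',x)\geq p_0$. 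Once these pieces are in place, multiplying through gives \eqref{eq:greenlower} with $c=c(u)$ as claimed.
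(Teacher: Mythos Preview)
Your overall strategy---anchor the Green function near $x$ and then propagate the lower bound via a Harnack chain---is the right one and is exactly what the paper does in its Step~2. But there is a genuine gap in how you set up the anchor. You propose to start the chain at a neighbor $x'$ of $x$ and then use balls of radius ``proportional to $r$ but small enough that the doubled ball avoids $x$''. These two requirements are incompatible: if each chain ball has radius $\rho r$ for some fixed $\rho>0$, then to keep the doubled ball $B_{z}(2\rho r)$ away from $x$ you need $d(z,x)\ge 2\rho r$, so no such ball can contain $x'$, which sits at distance~$1$ from $x$. If instead you let the ball radii shrink near $x$ (say, radius $\asymp d(z,x)$), then escaping from $x'$ out to distance $\asymp r$ costs $O(\log r)$ Harnack links, and the accumulated constant $C^{O(\log r)}$ is polynomial in $r$, which destroys the bound. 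So the claim that ``the number of balls needed \ldots\ is bounded by a constant depending only on $u$'' fails for an anchor at $x'$.

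The paper resolves exactly this difficulty by choosing a different anchor. In its Step~1 it invokes the equivalence (under \eqref{eq:p0} and \eqref{eq:EHI}) between \eqref{eq:EHI} and the Green-function Harnack condition (HG), together with the resistance estimate $R_{\rm eff}(B_x(r/2),B_x(r)^c)\asymp r^{\beta-\alpha}$, to obtain directly
\[
\inf_{y\in B_x(r/2)} g_r(x,y)\ \ge\ c\,r^{\beta-\alpha}.
\]
This gives the anchor on the sphere $\partial B_x(r/2-1)$, \emph{already at scale $r$}, without any chaining. Only then does it run a Harnack chain from $\partial B_x(r/2)$ to $\partial B_x(ur)$, using balls of radius $\lfloor(1-u)r\rfloor$; now the doubled balls automatically avoid $x$ and stay inside $B_x(r)$, and the number of links is at most $\tfrac{1}{2}\cdot\tfrac{u-1/2}{1-u}+O(1/r)$, genuinely depending only on $u$. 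Your decomposition $g_r(x,y)=\mathbb{P}_x[\tau_y<\tau_r]\,g_r(y,y)$ in the first paragraph is correct but does not sidestep the issue: bounding $\mathbb{P}_x[\tau_y<\tau_r]$ uniformly below is equivalent (via \eqref{eq:st_gr_fc}) to the very Green-function lower bound you are after. The missing ingredient is precisely the scale-$r$ anchor that the (HG)/resistance argument supplies.
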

\begin{proof}
We prove this lemma in two steps. First we show that \eqref{eq:greenlower} holds for $u=\frac{1}{2}$ (and hence for all $u\in \left(0,\frac{1}{2}\right)$). Then we use a chaining argument to extend the estimate to any $u \in \left(\frac{1}{2}, 1\right)$. The elliptic Harnack inequality (EHI) is used in both steps.

\underline{Step 1:} To prove \eqref{eq:greenlower} for $u=\frac{1}{2}$, we need a related Harnack inequality for the stopped Green function stated in \cite{GT02}. We say that the Green function satisfies the condition (HG) if there exist a constant $C_1\geq 1$ such that for all $x\in \SG$, $r>0$, and finite sets $D \supset B_x(r)$,
\begin{align}\tag{HG}
\sup_{y\in B_x(r/2)^c} g_D(x,y) \leq C_1 \inf_{y\in B_x(r/2)} g_D(x,y),
\end{align}
where $g_D(x,y)$ represents the expected number of visits to $y$ before leaving the set $D$, when starting the random walk at $x$.
It is shown in \cite[Theorem 2]{Barlow05} that under the conditions ($p_0$) and (EHI), there exists a constant $C_2\geq 1$ such that if $x_0 \in \SG$, $r>0$, $d(x_0, x)= d(x_0,y) =r/2$, and $B_{x_0}(r) \subset D$, then
\begin{align}
\label{HG}
C_2^{-1} g_D(x_0, y) \leq g_D(x_0, x) \leq C_2 g_D(x_0, y).
\end{align}
In particular this implies that (EHI) and (HG) are equivalent. Based on this equivalence, it can be further shown (see \cite[Lemma 3.7 and Proposition 3.7]{TelcsBook}) that under ($p_0$) and (EHI), there exists $C_3>0$ such that for any ball $B_x(ur)$ with $u \in \left(0, \frac{1}{2}\right]$,
\begin{align}
\label{greenhalf}
\inf_{y\in B_x(ur)} g_r(x,y) \geq C_3 R_{\rm eff}\left(B_x(ur), B_x(r)^c\right).
\end{align}

The conditions ($p_0$) and (EHI) hold on $G$; see Section \ref{sec:prelim-sg}. Furthermore the effective resistance estimate is known on Sierpinski gasket graphs $\SG$; on account of \cite[Proposition 2.3]{TelcsBook} there exist $C \geq c>0$ such that for all $x\in \SG$ and $r>0$,
\begin{align}
\label{res}
c r^{\beta-\alpha} \leq R_{\rm eff}(B_x(r/2), B_x(r)^c) \leq C r^{\beta-\alpha}.
\end{align}
Altogether \eqref{greenhalf} and \eqref{res} yield
\begin{align}
\label{est12}
\inf_{y\in B_x(r/2)} g_r(x,y) \geq c C_3 r^{\beta-\alpha}
\end{align}
for all $r>0$.

\underline{Step 2:} We now extend the estimate to the ball $B_x(ur)$, for $u\in \left(\frac{1}{2},1\right)$. Call $h:=g_r(x,\cdot)$, which is a nonnegative harmonic function on $B_x(r) \setminus \{x\}$. Let $x_1 \in \partial B_x(ur)$ be such that $h(x_1) = \inf_{y\in \partial B_x(ur)} h(y)$.

For $x,y\in \SG$, denote by $\gamma(x,y)$ be the shortest path  in $\SG$ connecting $x$ and $y$. Let $y_1$ be the intersection point of $\gamma(x,x_1)$ and $\partial B_x(r/2-1)$. Along the path $\gamma(y_1, x_1)$ we construct a minimal chain of intersecting balls 
 $\left\{B_{z_k}(r'): k \in \{1,2,\cdots, K\}\right\}$, where $z_k \in \gamma(y_1, x_1)$ for every $k$, $y_1 \in B_{z_1}(r')$, and $x_1 \in B_{z_K}(r')$. In order to apply (EHI) to each ball $B_{z_k}(r') \subset B_{z_k}(2r')$, we choose the radius $r'>0$ such that $B_{z_k}(2r') \subset \left(B_x(r)\setminus\{x\}\right)$ for every $k$. An easy geometric reasoning shows that one can take $r'= \lfloor (1-u)r \rfloor$, and the chain consists of $K$ balls with 
\begin{align}
\label{K}
K \leq \frac{(u-\frac{1}{2})r}{ 2(r'-1) } \leq \frac{1}{2}\frac{u-\frac{1}{2}}{1-u} + \mathcal{O}\left(\frac{1}{r}\right)
\end{align}
as $r\to\infty$. Applying (EHI) to the function $h$ successively yields the comparison
\begin{align}
\label{h1}
h(y_1) \leq C h(z_1) \leq C^{2} h(z_2) \leq \cdots \leq  C^{K+2} h(x_1),
\end{align}
In other words, there exists $C_4 = C_4(u)>0$ such that for all sufficiently large $r$,
\begin{align}
\label{h}
h(x_1) \geq C_4 h(y_1).
\end{align}
Recall the maximum (or minimum) principle for harmonic functions, \emph{e.g.\@} \cite[Proposition 3.1]{TelcsBook}: if $h$ is harmonic on a finite set $A\subset \Gamma$, then
\begin{align}
\label{max}
\max_{\overline A} h= \max_{\partial A}h, \quad
\min_{\overline A} h= \min_{\partial A} h,
\end{align}
where $\overline A = A \cup \partial A$. By taking $A=B_x(ur) \setminus \{x\}$, we have $\partial A = \partial B_x(ur) \cup \{x\}$. By the minimum principle for the function $h:=g_r(x,\cdot)$
on the set $A$ 
\begin{align}
\label{min}
\inf_{A}  h \geq \inf_{\overline{A}} h = \inf_{\partial A} h.
\end{align}
Since $g_r(x,y) \leq g_r(x,x)$ for all $y \in \SG$, $h:=g_r(x,\cdot)$ attains a maximum at $x$,
which together with \eqref{min} implies
$$
\inf_{B_x(ur)} h \geq \inf_{\partial B_x(ur)} h.
$$
Using \eqref{h} and \eqref{est12}, we deduce that for all sufficiently large $r$,
\begin{align}
\inf_{B_x(ur)} h \geq \inf_{\partial B_x(ur)} h=h(x_1) \geq C_4 h(y_1) \geq C_4 \inf_{B_x(r/2)} h \geq C_4 cC_3 r^{\beta-\alpha},
\end{align}
and this proves the claim. 
\end{proof}

We now prove that the probability that a random walk on $\SG$ hits a set whose complement has size at most $\epsilon r^{\alpha}$ is bounded away from zero. The proof is  similar to \cite[Lemma 11]{lawler_1995}. 

\begin{lem}\label{lem:exit-set-ball}
Let $x\in \SG$ and $X(t)$ be a simple random walk on $\SG$ started at $x$. For every $\epsilon>0$, there exists $\eta>0$ such that if $A\subset B_x(r)$,  with $|A| \geq \epsilon |B_x(r)|$, then
$$
\mathbb{P}_x\left[\sigma_A < \tau_r(x)\right]\geq \eta,
$$
where $\sigma_A$ is the first time $X(t)$ hits $A$. 
\end{lem}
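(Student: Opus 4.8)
The plan is to combine a first-moment computation with the stopped-Green-function estimates of Lemmas \ref{lem:up_bd_gn} and \ref{lem:inf-gr-fr}. Write $g_r:=g_{B_x(r)}$ for the Green function of $(X(t))$ killed on exiting $B_x(r)$, and let $N$ be the number of visits to $A$ made by $(X(t))$ strictly before $\tau_r(x)$. Since $\sigma_A$ is the \emph{first} hitting time of $A$, the strong Markov property at $\sigma_A$ gives
\[
\mathbb{E}_x[N]=\mathbb{P}_x[\sigma_A<\tau_r(x)]\cdot\mathbb{E}_x\!\left[N\mid \sigma_A<\tau_r(x)\right]\ \le\ \mathbb{P}_x[\sigma_A<\tau_r(x)]\cdot\sup_{a\in A}\sum_{b\in A}g_r(a,b).
\]
For the inner sum I would use $g_r(a,b)\le g_r(b,b)=R_{\rm eff}(b,B_x(r)^c)\le R_{\rm eff}(b,b')\asymp d(b,b')^{\beta-\alpha}$ with $b'\in B_x(r)^c$ chosen so that $d(b,b')<2r$, together with $|A|\le|B_x(r)|\le Cr^{\alpha}$ from \eqref{eq:vg}; this yields $\sup_{a\in A}\sum_{b\in A}g_r(a,b)\le C_1 r^{\beta}$. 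Consequently it is enough to establish the matching lower bound $\sum_{a\in A}g_r(x,a)\ge c(\epsilon)\,r^{\beta}$, because then $\mathbb{P}_x[\sigma_A<\tau_r(x)]\ge \mathbb{E}_x[N]/(C_1 r^\beta)\ge c(\epsilon)/C_1=:\eta$.

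For the lower bound, the geometric input I would invoke is an \emph{annular decay estimate} on $\SG$: by Ahlfors $\alpha$-regularity \eqref{eq:vg} and quasiconvexity of $\SG$ there is $\gamma>0$ (indeed $\gamma=\alpha-1$ works) with $|B_x(r)\setminus B_x((1-\delta)r)|\le C\delta^{\gamma}|B_x(r)|$ uniformly in $x$, $r$, $\delta$. Given $\epsilon>0$, fix $\delta=\delta(\epsilon)\in(0,1)$ so small that $C\delta^{\gamma}\le \epsilon/2$; then
\[
|A\cap B_x((1-\delta)r)|\ \ge\ |A|-|B_x(r)\setminus B_x((1-\delta)r)|\ \ge\ \tfrac{\epsilon}{2}|B_x(r)|\ \ge\ \tfrac{\epsilon c}{2}\,r^{\alpha}.
\]
Lemma \ref{lem:inf-gr-fr}, applied with $u=1-\delta\in(0,1)$ (legitimate for all $r$ large, and $\delta$ depends only on $\epsilon$), gives $g_r(x,a)\ge c(\delta)\,r^{\beta-\alpha}$ for every $a\in B_x((1-\delta)r)$. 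Multiplying,
\[
\sum_{a\in A}g_r(x,a)\ \ge\ \sum_{a\in A\cap B_x((1-\delta)r)}g_r(x,a)\ \ge\ \tfrac{\epsilon c}{2}\,r^{\alpha}\cdot c(\delta)\,r^{\beta-\alpha}\ =\ c(\epsilon)\,r^{\beta},
\]
which is exactly what was needed, and the proof is complete.

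The step I expect to be the main obstacle is the annular decay estimate: it is the only point where one must go beyond the abstract heat-kernel/resistance machinery and use the actual combinatorics of $\SG$. Its role is to rule out the scenario in which $A$ hides in a thin shell near $\partial B_x(r)$, where $g_r(x,\cdot)$ is too small for the first-moment argument to close. If one does not wish to cite annular decay as a black box, it can be proved directly on $\SG$ by decomposing $B_x(r)$ into the sub-gaskets meeting it and checking that the proportion of vertices within distance $\delta r$ of $\partial B_x(r)$ is $O(\delta^{\alpha-1})$, uniformly in $x$ and $r$; with that in hand, the rest of the argument is a routine assembly of \eqref{eq:vg}, Lemma \ref{lem:up_bd_gn} and Lemma \ref{lem:inf-gr-fr}.
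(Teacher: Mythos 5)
Your argument is correct and is essentially the paper's own proof: both estimate the expected number of visits to $A$ before $\tau_r(x)$, obtain the lower bound $c(\epsilon)r^{\beta}$ by discarding a thin annulus near $\partial B_x(r)$ and applying Lemma \ref{lem:inf-gr-fr} on the slightly smaller ball, and divide by an upper bound of order $r^{\beta}$ for the conditional number of visits after the first hit of $A$. The only cosmetic differences are that the paper gets the $Cr^{\beta}$ upper bound from the uniform exit-time estimate of Lemma \ref{lem:exit-time-upper-bound} rather than from $|A|\cdot\sup_{b}g_r(b,b)$, and that your annular decay input $|B_x(r)\setminus B_x((1-\delta)r)|\le C\delta^{\alpha-1}|B_x(r)|$ is exactly the estimate the paper records in Lemma \ref{lem:an-bn} and the remark following it for arbitrary centers.
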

\begin{proof}
Without loss of generality we prove the inequality for all sufficiently large $r>0$.
Let $V$ be the number of visits to $A$ of the random walk $\big(X(t)\big)$ started at $x$, and before leaving $B_x(r)$:
\begin{equation*}
V=\sum_{t=0}^{\tau_r(x)-1}\mathbf{1}_{\left\{X(t)\in A\right\}}.
\end{equation*}
Then we have
\begin{equation*}
\mathbb{P}_x[\sigma_A<\tau_r(x)]\geq \mathbb{P}_x[V\geq 1]=\dfrac{\mathbb{E}_x[V]}{\mathbb{E}_x[V|V\geq 1]}.
\end{equation*}
Observe that on the event $\{V\geq 1\}$, $\sigma_A < \tau_r(x)$ and $V\leq \tau_r(x) - \sigma_A$. Combine this with the Markov property and we get
\begin{align}
\nonumber \mathbb{E}_x[V|V\geq 1] &\leq \mathbb{E}_x\left[\tau_r(x)-\sigma_A | V\geq 1\right]\\
\nonumber & = \mathbb{E}_x\left[\mathbb{E}_{X(\sigma_A)}[\tau_r(x)-\sigma_A] | V\geq 1 \right]\\
\nonumber &= \mathbb{E}_x \left[\mathbb{E}_{X(\sigma_A)} [\tau_r(x)]|V\geq 1\right] \\
\label{eq:up-bnd-visits} &\leq \mathbb{E}_x[Cr^{\beta} | V\geq 1] = Cr^\beta,
\end{align}
where in the last line we used Lemma \ref{lem:exit-time-upper-bound}, which states that $\mathbb{E}_y[\tau_r(x)] \leq Cr^\beta$ for all $y\in A\subset B_x(r)$.

Now we need a lower bound of order $\beta$ for $\mathbb{E}_x[V]$. Recall that the balls in $\SG$
have growth of order $\alpha$; see \eqref{eq:vg}. Let $\epsilon>0$. Then we can find $u=u(\epsilon)<1$ such that for all sufficiently large $r>0$
$$
\left|B_x(r) \setminus B_x(ur)\right| \leq  \frac{\epsilon}{2} |B_x(r)|.
$$
Since $\left|A\setminus B_x(ur)\right|\leq \left|B_x(r) \setminus B_x(ur)\right|$ and $A\subset B_x(r)$ with $|A| \geq \epsilon |B_x(r)|$, we have
$$
|A\cap B_x(ur)| =|A|-\left|A\setminus B_x(ur)\right|\geq \frac{\epsilon}{2} |B_x(r)|.
$$ 
On the other hand,
$$
\mathbb{E}_x[V] = \sum_{y\in A} g_r(x,y) \geq \sum_{y\in B_x(ur) \cap A} g_r(x,y) \geq \frac{\epsilon}{2} |B_x(r)| \inf_{y\in B_x(ur)} g_r(x,y).
$$
Lemma \ref{lem:inf-gr-fr} together with equation \eqref{eq:vg} imply that there exists a constant $c=c(u)$ such that
$$
\mathbb{E}_x[V]\geq \frac{\epsilon}{2}c r^{\alpha} r^{\beta-\alpha}=c(\epsilon)r^{\beta},
$$
which together with \eqref{eq:up-bnd-visits} yields
$$
\mathbb{P}_x\left[\sigma_A < \tau_r(x)\right]\geq \frac{c(\epsilon)}{C}=\eta >0,
$$
whence the claim.
\end{proof}

The statement of the next result is similar to \cite[Lemma 3.3]{idla-outerbd-dum-cop}. Nevertheless, the proof there uses the (wLB) on IDLA which we do not have. We use instead Lemma \ref{lem:exit-set-ball}.

\begin{lem}\label{lem:hit-set-ball}
There exist $\rho,\eta \in (0,1]$ such that for large enough $n$ and $n^{\frac{1}{\alpha(\alpha+1)}}<r<n$, the following holds. Let $x\in B_o(n)$ and let $S \subset B_o(n+r)$ be such that $|S\setminus B_o(n)|\leq \rho r^{\alpha}$. Let $X(t)$ be a simple random walk started at $x$, $\sigma_{Q}$ be the first time $(X(t))$ hits the set $Q:=B_o(n+r)\setminus (S\cup B_o(n))$, and $\tau_{n+r}(o)$ be the first time $(X(t))$ exits the ball $B_o(n+r)$ of radius $n+r$ around the origin $o$. Then 
\begin{equation*}
\mathbb{P}_x[\sigma_{Q}<\tau_{n+r}(o)]\geq \eta.
\end{equation*}
\end{lem}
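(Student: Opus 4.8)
The plan is to run the walk until it first reaches a point $z$ on the intermediate sphere $\partial B_o(m)$, with $m:=n+\lfloor r/2\rfloor$, and then to show that from any such $z$ the walk hits $Q$ before exiting $B_o(n+r)$ with probability bounded below, by applying Lemma~\ref{lem:exit-set-ball} inside a ball of radius of order $r$ around $z$.

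First, with $m=n+\lfloor r/2\rfloor$ one has $B_o(n)\subseteq B_o(m)\subseteq B_o(n+r)$, the inclusions being proper once $r$ is large; in particular any path exiting $B_o(n+r)$ must exit $B_o(m)$ strictly earlier (landing in $\partial B_o(m)\subseteq B_o(n+r)$), so $\tau_m(o)<\tau_{n+r}(o)$. Since $B_o(m)$ is finite and SRW on $\SG$ is recurrent, $\tau_m(o)<\infty$ a.s. If the walk restarted at $X(\tau_m(o))$ hits $Q$ before exiting $B_o(n+r)$, then so does the original walk; hence, by the strong Markov property at $\tau_m(o)$,
\[
\mathbb{P}_x[\sigma_Q<\tau_{n+r}(o)]\ \ge\ \mathbb{E}_x\big[\mathbb{P}_{X(\tau_m(o))}[\sigma_Q<\tau_{n+r}(o)]\big],
\]
so it suffices to prove $\mathbb{P}_z[\sigma_Q<\tau_{n+r}(o)]\ge \eta$ for every $z$ with $d(o,z)=m$, where $\eta>0$ is independent of $n,r,x,S$.

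Fix such a $z$ and set $s:=\lfloor r/4\rfloor$. Every $w\in B_z(s)$ satisfies $m-s<d(o,w)<m+s$; since $m-s\ge n$ and $m+s\le n+\tfrac34 r<n+r$, the ball $B_z(s)$ is disjoint from $B_o(n)$ and contained in $B_o(n+r)$, so in particular $\tau_s(z)\le\tau_{n+r}(o)$. Using $B_z(s)\cap B_o(n)=\emptyset$ we get $Q\cap B_z(s)=B_z(s)\setminus S$ with $S\cap B_z(s)\subseteq S\setminus B_o(n)$, whence by \eqref{eq:vg} and the hypothesis $|S\setminus B_o(n)|\le\rho r^{\alpha}$,
\[
|Q\cap B_z(s)|\ \ge\ |B_z(s)|-\rho r^{\alpha}\ \ge\ c_2 s^{\alpha}-\rho r^{\alpha}\ \ge\ \tfrac12 c_2 s^{\alpha},
\]
once $\rho$ is fixed small enough in terms of the volume-growth constants (and $r$ is large). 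Together with $|B_z(s)|\le C_2 s^{\alpha}$ from \eqref{eq:vg}, this gives $|Q\cap B_z(s)|\ge\epsilon\,|B_z(s)|$ for $\epsilon:=c_2/(2C_2)>0$, a constant depending only on $\SG$. Since $r>n^{1/(\alpha(\alpha+1))}$, the radius $s$ is large when $n$ is large, so Lemma~\ref{lem:exit-set-ball}, applied with centre $z$, radius $s$ and the set $A:=Q\cap B_z(s)\subseteq B_z(s)$, yields $\eta=\eta(\epsilon)>0$ with
\[
\mathbb{P}_z[\sigma_Q<\tau_{n+r}(o)]\ \ge\ \mathbb{P}_z\big[\sigma_{Q\cap B_z(s)}<\tau_s(z)\big]\ \ge\ \eta .
\]
Combining with the displayed inequality of the previous paragraph completes the proof, with $\rho$ and $\eta$ depending only on $\SG$.

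I do not expect a serious obstacle here: the argument is a reduction to a walk launched on the sphere $\partial B_o(n+\lfloor r/2\rfloor)$, followed by an application of Lemma~\ref{lem:exit-set-ball}, so no new potential-theoretic input on $\SG$ beyond \eqref{eq:vg} is needed. The only points requiring care are (i) choosing the intermediate radius $m=n+\lfloor r/2\rfloor$, the inner radius $s=\lfloor r/4\rfloor$, and the constant $\rho$ so that $B_z(s)$ lies strictly inside the annulus $B_o(n+r)\setminus B_o(n)$ while still filling a fixed fraction of its own volume after the set $S$ is deleted; and (ii) checking that the stopping-time comparisons $\tau_m(o)<\tau_{n+r}(o)$ and $\tau_s(z)\le\tau_{n+r}(o)$ and the strong-Markov bookkeeping are valid, which are deterministic consequences of the ball inclusions together with the a.s.\ finiteness of exit times from finite sets.
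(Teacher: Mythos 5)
Your proposal is correct and follows essentially the same route as the paper's proof: reduce by the strong Markov property to a starting point in the middle of the annulus (the paper uses the first point at distance $r/2$ from $B_o(n)$, you use the sphere $\partial B_o(n+\lfloor r/2\rfloor)$), then apply Lemma \ref{lem:exit-set-ball} in a ball of radius of order $r$ (the paper takes $r/3$, you take $\lfloor r/4\rfloor$) contained in the annulus, with $\rho$ chosen via \eqref{eq:vg} so that $Q$ fills a fixed fraction of that ball. The differences are purely in the choice of intermediate radius and constants.
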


\begin{proof}
For every path $\gamma$ from inside $B_o(n)$ to outside $B_o(n+r)$, let $y(\gamma)$ be the first vertex on this path for which $d\left(y(\gamma),B_o(n)\right)= \frac{r}{2}$. Let us denote by $Y$ the set of all vertices  $y(\gamma)$ for paths $\gamma$. Moreover, every path from $x\in B_o(n)$
to outside $B_o(n+r)$ must hit the set $Y$. Therefore by Markov's property, it suffices to prove the result for starting points $y\in Y$. Let us fix such a $y\in Y$, 
and consider the ball $B_y(r/3)$ of radius $r/3$ around $y$. By letting $A=B_y(r/3)\setminus S$ and using \eqref{eq:vg}, there exists $C\geq 1$ such that
$$
|A|\geq |B_y(r/3)|-|S|\geq \frac{1}{C} \left(\frac{r}{3}\right)^{\alpha}-\rho r^{\alpha}\geq \frac{1}{C}\left(\frac{r}{3}\right)^{\alpha}\Bigg(1-\left(\frac{3}{4}\right)^{\alpha}\Bigg)\geq
\frac{1}{C^2}\Bigg(1-\left(\frac{3}{4}\right)^{\alpha}\Bigg) \cdot \left|B_y(r/3)\right|,
$$
for $\rho=\frac{4^{-\alpha}}{C} \in (0,1]$. Putting $A=B_y(r/3)\setminus S\subset B_y(r/3)$ and $\epsilon=\frac{1}{C^2}\Bigg(1-\left(\frac{3}{4}\right)^{\alpha}\Bigg)$ in Lemma \ref{lem:exit-set-ball},
we then deduce the existence of $\eta>0$ (and, without loss of generality, it is understood that $\eta \leq 1$) such that
$$
\mathbb{P}_y[\sigma_A<  \tau_{r/3}(y)]\geq \eta.
$$
Next, since $d(y,B_o(n))=r/2$, we have that $d(y,\SG\setminus B_o(n+r))>r/3$ and $B_y(r/3)\subset B_o(n+r)-B_o(n)$. Then 
$$
\mathbb{P}_x[\sigma_{Q}<\tau_{n+r}(o)]\geq \mathbb{P}_y[\sigma_A<  \tau_{r/3}(y)]\geq \eta.
$$
\end{proof}

The previous Lemma investigates the behavior of a single particle attaching to the IDLA cluster. The next Lemma, which claims that with high probability, a constant fraction of the IDLA aggregate is absorbed in a fine annulus of $\SG$, resembles \cite[Lemma 3.3]{idla-outerbd-dum-cop}, with $\alpha$ (the volume growth of $\SG$) in place of $d$. 
For the reader's convenience, we state both the result and its proof adapted to our case and to our notation. 

\begin{lem}\label{lem:lemma3.3-duminil}
There exist $\delta>0$ and $p<1$ such that all $n$ large enough, for all $n^{1/(\alpha+1)}<k<n^{\alpha}$
and $x_1,\ldots ,x_k\in B_o(n)$, and for all $S\subset B_o(n)$, the following holds:
$$
\mathbb{P}\left[\left|\mathcal{I}\left(S;x_1,\ldots,x_k \mapsto B_o\left(n+k^{1/\alpha}\right)\right)\setminus S\right|\leq \delta k\right]\leq p^k.
$$
\end{lem}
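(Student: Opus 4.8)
The plan is to reveal the paused IDLA process one particle at a time and to show that, as long as the aggregate has not yet grown by $\delta k$ sites, each newly released walk attaches to the cluster with probability at least some fixed $\eta>0$; a coupling with i.i.d.\ Bernoulli variables will then turn this into the claimed exponential bound $p^{k}$.

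Concretely, I would set $r:=k^{1/\alpha}$, so that walks are paused on exiting $B_o(n+r)$, and note that the hypothesis $n^{1/(\alpha+1)}<k<n^{\alpha}$ is exactly what forces $n^{1/(\alpha(\alpha+1))}<r<n$, the range in which Lemma~\ref{lem:hit-set-ball} applies. Put $S_0:=S\subset B_o(n)$ and $S_j:=\I\big(S_{j-1};x_j\mapsto B_o(n+r)\big)$ for $j=1,\dots,k$, and let $\mathcal{F}_{j-1}$ be the $\sigma$-algebra generated by the first $j-1$ walks together with auxiliary independent randomness reserved for the coupling. Write $G_j$ for the event that the $j$-th walk exits $S_{j-1}$ strictly before leaving $B_o(n+r)$; then $N:=\big|\I(S;x_1,\dots,x_k\mapsto B_o(n+r))\setminus S\big|=\sum_{j=1}^{k}\mathbf{1}_{G_j}$, and since every contributing particle enlarges the aggregate by exactly one site while $S\subset B_o(n)$ (and $S_{j-1}\subset B_o(n+r)$ automatically), we get $|S_{j-1}\setminus B_o(n)|\le N_{j-1}:=\sum_{i<j}\mathbf{1}_{G_i}$. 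The crucial step is to establish that on the event $\{N_{j-1}<\delta k\}$, as soon as $\delta\le\rho$ (the constant of Lemma~\ref{lem:hit-set-ball}), one has $\mathbb{P}[G_j\mid\mathcal{F}_{j-1}]\ge\eta$: on that event $|S_{j-1}\setminus B_o(n)|\le\delta k\le\rho r^{\alpha}$, so Lemma~\ref{lem:hit-set-ball} applied with $S=S_{j-1}$, $x=x_j$ gives that with probability at least $\eta$ the walk hits $Q=B_o(n+r)\setminus(S_{j-1}\cup B_o(n))$ before $\tau_{n+r}(o)$, and since $Q$ is disjoint from $S_{j-1}$ and contained in $B_o(n+r)$ this forces the walk to have left $S_{j-1}$ before $\tau_{n+r}(o)$, i.e.\ $G_j$ occurs.

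To conclude, I would pass to an enlarged probability space and build i.i.d.\ Bernoulli$(\eta)$ variables $\xi_1,\dots,\xi_k$ (at step $j$ using an independent uniform to thin $\mathbf{1}_{G_j}$ down to conditional probability exactly $\eta$, so that each $\xi_j$ is adapted with conditional law Bernoulli$(\eta)$ given $\mathcal{F}_{j-1}$, hence the $\xi_j$ are genuinely i.i.d.) arranged so that $\xi_j\le\mathbf{1}_{G_j}$ whenever $N_{j-1}<\delta k$. On $\{N<\delta k\}$ one has $N_{j-1}\le N<\delta k$ for all $j\le k$, so $\sum_{j=1}^{k}\xi_j\le\sum_{j=1}^{k}\mathbf{1}_{G_j}=N<\delta k$, whence $\mathbb{P}[N<\delta k]\le\mathbb{P}\big[\mathrm{Bin}(k,\eta)<\delta k\big]$; choosing $\delta<\min(\rho,\eta)$ and applying a standard Chernoff bound for the binomial gives $\mathbb{P}[N\le\delta k]\le p^{k}$ with $p=p(\eta,\delta)\in(0,1)$, for all $n$ (hence $k$ and $r$) large enough. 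I expect the only genuinely delicate points to be bookkeeping rather than substance: first, checking that ``hitting $Q$'' really entails attachment to the cluster --- which is precisely where the disjointness $Q\cap S_{j-1}=\emptyset$ is used --- and second, setting up the Bernoulli coupling carefully enough that the $\xi_j$ are i.i.d.\ and adapted; the volume-growth input and the applicability of Lemma~\ref{lem:hit-set-ball} over the stated range of $k$ are immediate from what has already been proved.
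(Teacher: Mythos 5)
Your proposal is correct and takes essentially the same route as the paper: the key input is Lemma \ref{lem:hit-set-ball}, which gives a uniform attachment probability $\eta$ for each released particle, and the bound then follows by stochastic domination by a binomial random variable plus a Chernoff estimate. The only difference is bookkeeping: the paper restricts attention to the first $\lfloor\rho k\rfloor$ particles, for which $\left|\mathcal{I}_j\setminus B_o(n)\right|\leq j\leq \rho r^{\alpha}$ holds deterministically, and dominates by a $(\lfloor\rho k\rfloor,\eta)$-binomial, while you run all $k$ particles and obtain the domination through an adapted coupling on the event that the aggregate has not yet gained $\delta k$ sites; both arguments are valid.
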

\begin{proof}
Let $r=k^{1/\alpha}$ and fix $\rho,\eta \in (0,1]$ as in Lemma \ref{lem:hit-set-ball}. Moreover, let $X_1(t),\ldots,X_k(t)$ be simple random walks that start at $x_1,\ldots, x_k$ respectively and stop when exiting $B_o(n+r)$, and that generate the stopped IDLA cluster. Let $k'=\lfloor \rho k\rfloor\leq \rho r^{\alpha}$, and for $j\in\{1,\ldots k'\}$, denote
$$
\mathcal{I}_j=\mathcal{I}\left(S;x_1,\ldots,x_j\mapsto B_o(n+r)\right).
$$
By construction, since only $j$ vertices can add to the IDLA cluster, we have $\left|\mathcal{I}_j\setminus B_o(n)\right|\leq j\leq \rho r^{\alpha}$, so we are in the setting of Lemma \ref{lem:hit-set-ball}, with $\mathcal{I}_j\subset B_o(n+r)$ instead of the set $S$, for all $j\in\{1,\ldots k'\}$. This implies that for all $j\in\{1,\ldots k'\}$
$$
\mathbb{P}[X_{j+1}\cap \left(B_o(n+r)\setminus \mathcal{I}_j\right)\neq \emptyset | \mathcal{I}_j]\geq \eta.
$$
Thus $\left|\mathcal{I}\left(S;x_1,\ldots,x_k\mapsto B_o(n+r)\right)\setminus S\right|$ dominates a $(k',\eta)$-binomial random variable, which implies that there exist $\delta>0$ and $p<1$ depending only on $\rho,\eta$ such that
$$
\mathbb{P}\left[\left|\mathcal{I}\left(S;x_1,\ldots,x_k \mapsto B_o\left(n+r\right)\right)\setminus S\right|\leq \delta k\right]\leq p^k,
$$
and this proves the desired.
\end{proof}
We next show that the condition \textbf{Lower bound} (LB) from \cite{idla-outerbd-dum-cop}
holds on $\SG$. To do this, we first need to estimate the growth of the annulus $B_o(n)\setminus B_o\left(n(1-\epsilon)\right)$ in $\SG$. Recall that $b_n=|B_o(n)|$.

\begin{lem}\label{lem:an-bn} 
For $1/n<\epsilon<1$, the growth of the annulus $B_o(n)\setminus B_o\left(n(1-\epsilon)\right)$ in $\SG$ satisfies the upper bound
\begin{equation}\label{1-epsilon}
b_n-b_{n(1-\epsilon)}\leq 4 \epsilon^{\alpha-1} b_n.
\end{equation}
\end{lem}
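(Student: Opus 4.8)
The plan is to exploit the exact self-similarity of $\SG$. The first step is to reduce to the one-sided gasket $V_\infty$. Since $o$ has degree $4$ and $\SG$ is the union of $V_\infty$ and its reflection $-V_\infty$, meeting only at $o$, any geodesic issued from $o$ stays inside one of the two copies, so $d_{\SG}(o,\cdot)=d_{V_\infty}(o,\cdot)$ on $V_\infty$. Writing $\beta_n:=|B_o(n)\cap V_\infty|$ we get $b_n=2\beta_n-1$, and the annulus $B_o(n)\setminus B_o(n(1-\epsilon))$ is a disjoint union of two translates of the corresponding annulus in $V_\infty$. Hence it suffices to prove $\beta_n-\beta_m\le C\bigl(\tfrac{n-m}{n}\bigr)^{\alpha-1}\beta_n$ for integers $m\le n$, for a constant $C$ which one tracks to be at most $4$; alternatively one first disposes of the range $\epsilon\ge\epsilon_0$ (where $4\epsilon^{\alpha-1}\ge 1$, so the bound is trivial) and runs the argument only for small $\epsilon$, where there is ample slack.

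The main tool is the self-similar recursion
\begin{equation*}
\beta_n=|V_k|+2\beta_{n-2^k}-2,\qquad 2^k<n\le 2^{k+1},
\end{equation*}
together with $|V_k|=(3^{k+1}+3)/2\asymp(2^k)^\alpha$ and $\beta_{2^k+1}-\beta_{2^k}=|V_k|-\beta_{2^k}=2^k+1$. To prove the recursion, observe that $B_o(n)\cap V_\infty\subseteq V_{k+1}$ for $n\le 2^{k+1}$, that $V_{k+1}$ consists of $V_k$ together with two translated copies of $V_k$, and that---because $\SG$ is finitely ramified---each of those two side copies is attached to $V_k$ at a single corner vertex, which lies at distance $2^k$ from $o$; a geodesic from $o$ into a side copy must therefore pass through that corner, so the part of $B_o(n)$ inside each side copy is isometric to a ball of radius $n-2^k$ around a corner of $V_k$, of cardinality $\beta_{n-2^k}$, and the $-2$ removes the two double-counted corners.

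Next I would ``unfold'' the annulus by iterating the recursion. As long as both current radii lie in one dyadic block $(2^j,2^{j+1}]$, a single application replaces $\beta_n-\beta_m$ by $2(\beta_{n'}-\beta_{m'})$, where the width is preserved, $n'-m'=n-m$, and the radius at least halves, $n'\le n/2$. Iterating $\approx\log_2\!\bigl(n/(n-m)\bigr)$ times accumulates a factor $\asymp n/(n-m)$ while driving the radius down to order $n-m$; at that point the residual difference is at most the volume of a ball of radius $\mathcal{O}(n-m)$, which is $\mathcal{O}\bigl((n-m)^\alpha\bigr)$ by the uniform volume growth \eqref{eq:vg}. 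Multiplying back yields $\beta_n-\beta_m\lesssim\tfrac{n}{n-m}(n-m)^\alpha=\bigl(\tfrac{n-m}{n}\bigr)^{\alpha-1}n^\alpha\asymp\epsilon^{\alpha-1}\beta_n$, as required.

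The delicate point, which I expect to be the main obstacle, is the ``straddling'' situation in which the two reduced radii land on opposite sides of a power of two $2^j$. There one splits $\beta_{n'}-\beta_{m'}=(\beta_{n'}-\beta_{2^j})+(\beta_{2^j}-\beta_{m'})$ into two one-sided annuli anchored at $2^j$ and bounds each of these by the same procedure; the new feature is that anchoring produces extra ``surface'' terms of size $\beta_{2^j+1}-\beta_{2^j}=2^j+1$. Traced back through the (logarithmically many) iterations, these surface terms contribute a total of order $n$; since the hypothesis $\epsilon>1/n$ gives $\epsilon n\ge 1$ and hence $n\le(\epsilon n)^{\alpha-1}n=\epsilon^{\alpha-1}n^\alpha$, they are absorbed into the desired bound. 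The remaining work is then purely a bookkeeping of the implied constants through the iteration, using the explicit value of $|V_k|$, to land on the stated constant $4$.
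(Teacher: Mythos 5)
Your route is genuinely different from the paper's. The paper proves \eqref{1-epsilon} by a one-step covering argument: for $2^k<n\le 2^{k+1}$ and $2^{-m-1}<\epsilon\le 2^{-m}$ the annulus $B_o(n)\setminus B_o(n(1-\epsilon))$ is covered by at most $2\cdot 2^{m+1}$ triangles of side $2^{k-m}$, each of cardinality $(3^{k-m+1}+3)/2$, and this is compared with $b_n\ge 3^{k+1}+2$; the explicit constant drops out in two lines. Your ingredients that are sound: the reduction to one half of $\SG$ with $b_n=2\beta_n-1$; the recursion $\beta_n=|V_k|+2\beta_{n-2^k}-2$ for $2^k<n\le 2^{k+1}$ (with one small repair: a path from $o$ into a side copy need not pass through the corner attaching that copy to $V_k$ — it can enter through the corner the two side copies share — but any such path has length at least $2^{k+1}\ge n$, so inside $B_o(n)$ your conclusion stands); and the aligned-case unfolding, which with the monotone quantity (accumulated factor)$\times$(current radius)$\le n$ does give $\beta_n-\beta_m\lesssim n\,(n-m)^{\alpha-1}$.

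There are, however, two genuine gaps. First, your accounting of the straddling case is not right as stated: each surface term $\beta_{2^j+1}-\beta_{2^j}=2^j+1$ enters the final bound multiplied by the factor $2^a$ accumulated in the preceding unfoldings, and since a straddle at $2^j$ occurs only when $2^j\ge n_i/2$, a single such contribution $2^a(2^j+1)$ is already of order $n$; ``tracing them back through the logarithmically many iterations'' would then give $O(n\log n)$, not $O(n)$. The total is indeed $O(n)$, but only because straddles are rare: after the first straddle the surviving lower piece has outer radius an exact power of two, hence unfolds without straddling until its radius has dropped to at most twice its width, so only boundedly many surface terms arise — an argument absent from your sketch and, in my view, the actual crux of your approach. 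Second, the constant $4$ is nowhere established: your terminal pieces are bounded via the non-explicit constants of \eqref{eq:vg}, and the claims of ``ample slack'' for small $\epsilon$ and of bookkeeping that ``lands on the stated constant $4$'' are exactly the nontrivial part and are not carried out (making \eqref{eq:vg} explicit from $|V_k|=(3^{k+1}+3)/2$ gives roughly $\beta_r\le\tfrac92 r^{\alpha}+O(1)$, and whether the sum of terminal and surface contributions stays below $4\epsilon^{\alpha-1}b_n$ requires a careful worst-case check over the position of $n$ in its dyadic block). As written, your plan can be completed to $b_n-b_{n(1-\epsilon)}\le C\epsilon^{\alpha-1}b_n$ for some unspecified $C$ — which would suffice for the later uses of the lemma — but it does not prove the statement with the constant $4$, which the paper's covering argument delivers directly.
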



\begin{proof}
To motivate the proof, we first carry out the estimate using closed balls, even though the statement calls for open balls. Let $n=2^k$ and $\epsilon=2^{-m}$ for some positive integers $k$ and $m$ with $m<k$. Then $\overline{B}_o(n)$ is the union of two triangles of side $2^k$ joined at $o$. It is easily shown via induction that each triangle has cardinality $(3^{k+1}+3)/2$, so $\mid \overline{B}_o(n)\mid=2[(3^{k+1}+3)/2]-1 = 3^{k+1} +2$. Meanwhile 
one observes that the difference 
$\overline{B}_o(n)\backslash \overline{B}_o(n(1-\epsilon))$ 
consists of two copies of the union of $2^m$ identical triangles each of side 
$2^{k-m}$. Therefore $\mid \overline{B}_o(n) \mid -\mid \overline{B}_o(n(1-\epsilon))\mid$  is less than 
$$
2\cdot 2^m \cdot \frac{3^{k-m+1}+3}{2} = \frac{2^m(3^{k-m+1}+3)}{3^{k+1}+2}\mid \overline{B}_o(n)\mid \leq 2^m (3^{-m} + 3^{-k}) \mid \overline{B}_o(n)\mid \leq 2\left(\frac{2}{3}\right)^m \mid \overline{B}_o(n)\mid = 2 \epsilon^{\alpha-1} \mid\overline{B}_o(n)\mid.
$$

For the actual proof, we consider the case when 
$2^k< n \leq 2^{k+1}$ and 
$ 2^{-m-1}< 
 \epsilon\leq 2^{-m}$ for positive integers $m$ and $k$ with $m<k$.
 Then the difference 
 $B_o(n)\backslash B_o(n(1-\epsilon))$ 
can be covered by a   union of at most $2\cdot 2^{m+1}$ identical triangles each of side 
 $2^{k-m}$. 
 Therefore the left-hand side in 
 \eqref{1-epsilon} is less than 
$$
 { 2\cdot 2^{m+1}\cdot\frac{3^{k-m+1}+3}{2}}   \leq 
 2\frac{2^m(3^{k-m+1}+3)}{3^{k+1}+2} |B_o(n)|
 \leq 
 2^{m+2}3^{-m}|B_o(n)| 
 \leq 4 \epsilon^{\alpha-1} |B_o(n)|.  
$$
\end{proof}

\begin{remark}
If the center of the ball (or annulus) is an arbitrary vertex $x$ of $\SG$ rather than $o$, then a similar argument shows that \eqref{1-epsilon} holds with the constant $4$ replaced by $8$. This is due to the fact that for $2^k <n \leq 2^{k+1}$, the ball $B_x(n)$ can be covered by two 
joint triangles of side $2^{k+1}$.
\end{remark}

\begin{prop}\label{prop:LB}
Condition \textbf{Lower bound} (LB) holds on $\SG$, that is,
$$
\dfrac{\left|\mathcal{I}_{b_n}(o\mapsto n)\right|}{b_n}\to 1, \quad \text{almost surely}.
$$
\end{prop}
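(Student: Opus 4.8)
The plan is to deduce Proposition \ref{prop:LB} from the inner bound established in Section \ref{sec:idla-inner-bound} by transferring that argument to the \emph{stopped} cluster and then comparing particle counts. Since $\mathcal{I}_{b_n}(o\mapsto n)\subseteq B_o(n)$ by construction, one always has $|\mathcal{I}_{b_n}(o\mapsto n)|\le b_n$, so it suffices to show that for every fixed $\epsilon\in(0,1)$ one has, almost surely, $|\mathcal{I}_{b_n}(o\mapsto n)|\ge (1-c(\epsilon))\,b_n$ for all large $n$, with $c(\epsilon)\to 0$ as $\epsilon\downarrow 0$; letting $\epsilon$ run over a countable sequence then gives the claim. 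Note that the already-proven inner bound concerns the \emph{unstopped} cluster, and the stopped cluster may in principle be strictly smaller (the paused particles might have filled holes inside $B_o(n)$), so the argument has to be re-run rather than merely quoted.

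First I would repeat the inner-bound proof of Section \ref{sec:idla-inner-bound} for the stopped process with $N_n:=\lceil(1+\epsilon)b_n\rceil$ particles, all launched from $o$ and paused upon exiting $B_o(n)$, to show that almost surely $B_o(n(1-\epsilon))\subseteq \mathcal{I}_{N_n}(o\mapsto n)$ for all sufficiently large $n$. The one point that genuinely needs verification is that the inclusion $\{z\notin\mathcal{I}_{N_n}(o\mapsto n)\}\subseteq\{M=L\}$ still holds, where, exactly as before, $M$ counts the $N_n$ walks that visit $z$ before time $\tau_n$ and $L$ counts those that do so \emph{after} leaving the occupied cluster. This is true because a walk that ends up paused stays inside the current cluster as long as it is in $B_o(n)$, so it cannot be the walk that first occupies an as-yet-unvisited site $z\in B_o(n)$; a walk that instead settles falls into the same case analysis ($\tau_z^i\le\sigma^i$ forces $z$ already occupied; $\tau_z^i>\sigma^i$ puts the walk in $L$) as in Section \ref{sec:idla-inner-bound}. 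The expectations are unchanged, $\mathbb{E}[M]=N_n\,g_n(o,z)/g_n(z,z)$ and $\mathbb{E}[\tilde L]=\mathbb{E}_z[\tau_n]/g_n(z,z)$, and Lemma \ref{lem:mean-val-gr-fc} gives $\mathbb{E}[M]\ge(1+\epsilon)\,\mathbb{E}[\tilde L]$, which is precisely the multiplicative gap exploited in the inner-bound proof. Hence the same choice of threshold $a$, together with Lemma \ref{lem:dev-sum-indicators}, Lemma \ref{lem:e-l-bound}, and \eqref{eq:vg}, yields $\mathbb{P}[z\notin\mathcal{I}_{N_n}(o\mapsto n)]\le \exp(-cn^{\alpha/2})$, which is summable over $z\in B_o(n(1-\epsilon))$ and over $n$; Borel--Cantelli completes this step.

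Finally I would compare $\mathcal{I}_{N_n}(o\mapsto n)$ with $\mathcal{I}_{b_n}(o\mapsto n)$ on a common probability space built from $N_n$ i.i.d.\ walks: the first $b_n$ of them build exactly $\mathcal{I}_{b_n}(o\mapsto n)$, and each of the remaining $N_n-b_n\le 2\epsilon b_n$ particles (for $n$ large) adds at most one site, so $|\mathcal{I}_{N_n}(o\mapsto n)|\le|\mathcal{I}_{b_n}(o\mapsto n)|+2\epsilon b_n$. On the almost-sure event from the previous step, $|\mathcal{I}_{N_n}(o\mapsto n)|\ge |B_o(n(1-\epsilon))|=b_{n(1-\epsilon)}\ge (1-4\epsilon^{\alpha-1})\,b_n$ by Lemma \ref{lem:an-bn}. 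Combining the two bounds gives $|\mathcal{I}_{b_n}(o\mapsto n)|\ge (1-4\epsilon^{\alpha-1}-2\epsilon)\,b_n$ almost surely for all large $n$; since $\alpha>1$, the error $4\epsilon^{\alpha-1}+2\epsilon\to 0$ as $\epsilon\downarrow 0$, which is exactly what was needed, and together with the trivial upper bound we conclude $|\mathcal{I}_{b_n}(o\mapsto n)|/b_n\to 1$ almost surely. The main obstacle is the middle step: confirming that the inner-bound machinery transfers verbatim to the stopped cluster, i.e.\ verifying the $\{M=L\}$ inclusion through the paused-versus-settled case analysis. The rest is elementary bookkeeping with the monotone (and abelian) structure of IDLA and the annulus estimate of Lemma \ref{lem:an-bn}.
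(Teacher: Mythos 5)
Your proposal is correct and follows essentially the same route as the paper: the key point in both is that the $M$/$L$ counting in the inner-bound argument only involves visits to $z$ before $\tau_n$, so the inner ball $B_o(n(1-\epsilon))$ is covered by the \emph{stopped} cluster as well, and the annulus estimate of Lemma \ref{lem:an-bn} then forces the ratio to tend to $1$. The only difference is bookkeeping at the end: you rerun the argument with $\lceil(1+\epsilon)b_n\rceil$ stopped particles and subtract the at most $2\epsilon b_n$ sites the extra particles can add, whereas the paper asserts the stopped inner bound directly for $b_n$ particles; both versions yield the claim.
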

\begin{proof}
By construction $\mathcal{I}_{b_n}(o\mapsto n)$ is a subset of $B_o(n)$, that is 
$\dfrac{\left|\mathcal{I}_{b_n}(o\mapsto n)\right|}{b_n}\leq 1$. On the other hand,  from the inner boundary for IDLA cluster in Theorem \ref{thm:idla-sierpinski-gasket} we have that for every $\epsilon>0$, $B_o(n(1-\epsilon))\subset \mathcal{I}(b_n)$, for $n$ large with probability $1$. Actually, the proof of the inner boundary implies the stronger result that $B_o(n(1-\epsilon))\subset\mathcal{I}_{b_n}(o\mapsto n)$, since in the random variables $M$ and $L$ we count only particles that visit a point $z\in B_o(n(1-\epsilon))$ before exiting $B_o(n)$. Therefore, for every $\epsilon>0$, we have
$$
1-\dfrac{b_n-b_{n(1-\epsilon)}}{b_n}=\dfrac{b_{n(1-\epsilon)}}{b_n}\leq \dfrac{\left|\mathcal{I}_{b_n}(o\mapsto n)\right|}{b_n} \quad \text{almost surely}.
$$
On the other hand, Lemma \ref{lem:an-bn} yields
$$
1- 4 \epsilon^{\alpha -1}\leq 1-\dfrac{b_n-b_{n(1-\epsilon)}}{b_n},
$$
and the left hand side goes to $1$ as $\epsilon$ goes to $0$, which together with the first claim of the proof gives $\dfrac{\left|\mathcal{I}_{b_n}(o\mapsto n)\right|}{b_n}\to 1$ almost surely.
\end{proof}

Now we have all ingredients needed for the proof of the outer boundary for IDLA cluster in Theorem \ref{thm:idla-sierpinski-gasket}. This would be an application of \cite[Theorem 1.2 and Corollary 1.3]{idla-outerbd-dum-cop}.
Since there are some minor gaps in their proofs, 
for the sake of completeness, we give the whole proof, adapted to our case, here.
We can prove  \cite[Theorem 1.2]{idla-outerbd-dum-cop} without the \textbf{weak lower bound} (wLB) condition, condition which was used only in \cite[Lemma 3.2]{idla-outerbd-dum-cop}. 

As in \cite[Theorem 1.2]{idla-outerbd-dum-cop}, we construct inductively a sequence of IDLA aggregates $\mathcal{I}_j$, by stopping the particles at different distances $n_j$ from the origin.
If $k_j$ is the number of stopped particles, we choose the next distance $n_{j+1}$,
at which we pause particles again, in terms of $k_j$ and $n_j$. We iterate this procedure until there are fewer than $n_j^{1/(\alpha+1)}$ particles, at which point there are too few particles to affect the limiting outer radius of the IDLA.

\begin{proof}[Proof of the outer bound in Theorem \ref{thm:idla-sierpinski-gasket}]
To prove that for every $\epsilon>0$, $\mathcal{I}(b_n)\subset B_o(n(1+\epsilon))$
for $n$ large enough with probability $1$, we bound the event $[\mathcal{I}(b_n)\not\subset B_o(n(1+\epsilon))]$ by another event whose probability is exponentially decreasing in $n$, and then apply Borel-Cantelli.

As mentioned just above, we define recursively the sequence of aggregates $\mathcal{I}_j$
and the quantities $n_j,P_j,k_j$, $j=0,1,\ldots$ as follows. Fix first $n>0$, and let
\begin{equation*}
\begin{cases}
n_0 & = n\\
\mathcal{I}_0 & =\mathcal{I}_{b_n}(o\mapsto n)\\
P_0 & = P_{b_n}(o\mapsto n)\\
k_0& = |P_0|.
\end{cases}
\end{equation*}
In words, we start the general internal DLA process with $b_n=|B_o(n)|$ particles at $o$ and build the cluster $\mathcal{I}_0$ by stopping particles either when they attach to the existing cluster, or pausing them on  $B_o(n)^c$, that is, when they reach distance $n$ from $o$. So $\mathcal{I}_0$ is a subset of the unstopped IDLA cluster $\mathcal{I}(b_n)$ as defined in the introduction. Then $P_0$ gives the positions of the paused particles, which will continue their journey (only if there are enough particles to contribute to the behavior of the IDLA outer boundary) in order to build the next cluster $\mathcal{I}_1$. If they do not attach to $\mathcal{I}_1$ before reaching the distance $n_1$ (still to be defined) from the root, then they are paused again and used for the subsequent aggregate. Formally, for $j\geq 0$, let
\begin{equation*}
\begin{cases}
n_{j+1}& = 
		\begin{cases}
		n_j+k_j^{1/\alpha} & \text{ if } k_j>n_j^{1/(\alpha+1)}\\
		\infty & \text{ otherwise }.
		\end{cases}
\\
\mathcal{I}_{j+1} & =\mathcal{I}\left(\mathcal{I}_j;P_j\mapsto B_o\left(n_{j+1}\right)\right)\\
P_{j+1} & = P\left(\mathcal{I}_j;P_j\mapsto B_o\left(n_{j+1}\right)\right)\\
k_{j+1}& = |P_{j+1}|.
\end{cases}
\end{equation*}
We continue this iterative construction as long as we have enough particles. Let $J$ be the minimum value of $j$ for which $k_j\leq n_j^{1/(\alpha+1)}$. At this point, the particles left over ($|P_J|$ of them) evolve until attaching to the aggregate, without being paused anymore, and for all $j\geq J+1$, we have $\mathcal{I}_j=\mathcal{I}_{J+1}$. By the abelian property of the internal DLA process \eqref{eq:idla-abelian-prop}, we have that $\mathcal{I}_{J+1}$ and $\mathcal{I}(b_n)$ have the same distribution.
Since the aggregate at time $J$ is stopped before exiting $B_o(n_J)$, it holds $\mathcal{I}_J\subset B_o(n_J)$ and there are exactly $k_J\leq n_J^{1/(\alpha+1)}$ particles left for completing the aggregate $\mathcal{I}_{J+1}$, particles which cannot build too long tentacles. This means that after releasing the last $k_J$ particles, the radius of the ball which contains $\mathcal{I}_{J+1}$
cannot increase with more than $c^*k_J\leq c^*n_J^{1/(\alpha+1)}$, for some $c^*\leq 1$,
which implies that $\mathcal{I}_{J+1}\subset B_o\left(n_J+c^*n_J^{1/(\alpha+1)}\right)$. Then
\begin{equation*}
\mathbb{P}\left[\mathcal{I}(b_n)\not\subset B_o(n(1+\epsilon))\right]\leq \mathbb{P}\left[n_J+c^*n_J^{1/(\alpha+1)}>n(1+\epsilon)\right],
\end{equation*}
and we will upper bound the probability on the right hand side in the previous inequality.
We have $k_J\leq n_J^{1/(\alpha+1)}$ and $ n_{J-1}^{1/(\alpha+1)}< k_{J-1} <\ldots <k_o <n^{\alpha}$.
The fact $k_0 < n^{\alpha}$ follows from $k_0\leq b_n-b_{n(1-\epsilon)}$ together with Lemma \ref{lem:an-bn}, for $n$ large enough.
Moreover, for every $j=1,\ldots, J-1$, we can apply Lemma \ref{lem:lemma3.3-duminil},
by starting with the occupied cluster $\mathcal{I}_{j-1}\subset B_o(n_{j-1})$, and the paused particles  $P_{j-1}\in \partial B_o(n_{j-1})$. The number of paused particles $k_{j-1}$ used to build  $\mathcal{I}_{j}$ fulfill the relation $n_j^{1/(\alpha+1)}< k_{j-1}<n^{\alpha}<n_j^{\alpha}$. Then there exist $\delta<1$ and $p<1$ such that 

\begin{align*}
\mathbb{P}\left[\left|\I_{j}-\I_{j-1}\right|\leq \delta k_{j-1}\right]&=\mathbb{P}[k_j\geq (1-\delta)k_{j-1}]\\
&=\sum_{l=n_j^{1/(\alpha+1)}}^{ n_j^{\alpha}}\mathbb{P}[k_j\geq (1-\delta)k_{j-1}|k_{j-1}=l]\cdot \mathbb{P}[k_{j-1}=l]\\
& = \sum_{l=n_j^{1/(\alpha+1)}}^{ n_j^{\alpha}}\mathbb{P}[k_j\geq (1-\delta)l]\cdot \mathbb{P}[k_{j-1}=l]\\
& \leq \sum_{l=n_j^{1/(\alpha+1)}}^{ n_j^{\alpha}} p^l \cdot \mathbb{P}[k_{j-1}=l]\leq p^{n_j^{1/(\alpha+1)}}\leq p^{n^{1/(\alpha+1)}},\\
\end{align*}
for all $j=1,\ldots , J$. Since $J\leq n^{\alpha}$, together with the union bound we obtain 
\begin{equation}\label{eq:up_bd_kj}
\mathbb{P}[\exists\  1\leq j\leq J:\ k_j\geq (1-\delta)k_{j-1}]\leq \sum_{j=1}^{J}\mathbb{P}[k_j\geq (1-\delta)k_{j-1}]\leq n^{\alpha} p^{n^{1/(\alpha+1)}}.
\end{equation} 
In view of the inclusion of the events $\left\{\forall \ 1\leq j\leq J:\ k_j<(1-\delta)k_{j-1} \right\}\subseteq \left\{k_l<(1-\delta)^lk_0\right\}$ for any fixed $l\leq J$, we get that for any $l\leq J$
\begin{equation*}
\mathbb{P}\left[k_l\geq (1-\delta)^l k_0\right]\leq \mathbb{P}\left[\exists 1\leq j \leq l:\ k_j\geq (1-\delta)k_{j-1}\right].
\end{equation*}
Thus

\begin{align*}
\mathbb{P}\left[\exists 1\leq l\leq J:\ k_l\geq (1-\delta)^lk_0\right] & \leq \mathbb{P}\left[\bigcup_{l\leq J}\{k_l\geq (1-\delta)^lk_0\}\right]\\
& \leq \mathbb{P}\left[\bigcup_{l\leq J}\left\{\exists 1\leq j\leq l:\ k_j\geq (1-\delta)k_{j-1}\right\}\right]\\
& = \mathbb{P}\left[\exists\  1\leq j\leq J:\ k_j\geq (1-\delta)k_{j-1}\right].
\end{align*}
Altogether equation \eqref{eq:up_bd_kj} and the previous inequality imply that for some $\delta<1$ and $p<1$
\begin{equation}\label{eq:bd-main-thm}
\mathbb{P}\left[\exists 1\leq j\leq J:\ k_j\geq (1-\delta)^jk_0\right]\leq 
\mathbb{P}\left[\exists\  1\leq j\leq J:\ k_j\geq (1-\delta)k_{j-1}\right]
\leq n^{\alpha} p^{n^{1/(\alpha+1)}},
\end{equation}
that is
\begin{equation*}
\mathbb{P}\left[\forall \ 1\leq j\leq J: \ k_j < (1-\delta)^j k_0 \right]\geq 1-n^{\alpha} p^{n^{1/(\alpha+1)}}. 
\end{equation*}
In other words, with probability at least $1- n^{\alpha} p^{n^{1/(\alpha+1)}}$
\begin{equation*}
n_J=n+\sum_{j=0}^{J-1}k_j^{1/\alpha}< n+k_0^{1/\alpha}\sum_{j=0}^{J-1}\left((1-\delta)^{1/\alpha}\right)^j<n+k_0^{1/\alpha}\dfrac{1}{1-(1-\delta)^{1/\alpha}}.
\end{equation*}
Meanwhile, if $n_J+c^*n_J^{1/(\alpha+1)}>n(1+\epsilon)$, then there exists $c'$ such that for $n$ big enough
\begin{align*}
& n+k_0^{1/\alpha}\dfrac{1}{1-(1-\delta)^{1/\alpha}}+c'n^{1/(\alpha+1)}> \\
& > n+k_0^{1/\alpha}\dfrac{1}{1-(1-\delta)^{1/\alpha}}+c^*\left(n+k_0^{1/\alpha}\dfrac{1}{1-(1-\delta)^{1/\alpha}}\right)^{\frac{1}{\alpha+1}}>n(1+\epsilon),
\end{align*}
which implies that, for some constant $c_1>0$ and $n$ big enough
\begin{equation*}
k_0^{1/\alpha}>c_1\epsilon n \quad \Rightarrow \quad k_0 > c\epsilon ^\alpha b_n.
\end{equation*}
So by conditioning on the event $n_J<n+k_0^{1/\alpha}\dfrac{1}{1-(1-\delta)^{1/\alpha}}$, we obtain
\begin{align*}
\mathbb{P}&\left[\mathcal{I}(b_n)\not\subset B_o(n(1+\epsilon))\right]\leq \mathbb{P}\left[n_J+c^*n_J^{1/(\alpha+1)}>n(1+\epsilon)\right]\\
& = \mathbb{P}\left[n_J+c^*n_J^{1/(\alpha+1)}>n(1+\epsilon)\bigg|n_J<n+k_0^{1/\alpha}\dfrac{1}{1-(1-\delta)^{1/\alpha}}\right]\cdot \mathbb{P}\left[n_J<n+k_0^{1/\alpha}\dfrac{1}{1-(1-\delta)^{1/\alpha}}\right]\\
& +\mathbb{P}\left[n_J+c^*n_J^{1/(\alpha+1)}>n(1+\epsilon)\bigg|n_J\geq n+k_0^{1/\alpha}\dfrac{1}{1-(1-\delta)^{1/\alpha}}\right]\cdot \mathbb{P}\left[n_J\geq n+k_0^{1/\alpha}\dfrac{1}{1-(1-\delta)^{1/\alpha}}\right]\\
& \leq \mathbb{P}\left[k_0>c\epsilon^{\alpha}b_n\right]+\mathbb{P}\left[n_J\geq n+k_0^{1/\alpha}\dfrac{1}{1-(1-\delta)^{1/\alpha}}\right]\leq \mathbb{P}\left[k_0>c\epsilon^{\alpha}b_n\right]
+ \mathbb{P}\left[\exists 1\leq j\leq J:\ k_j\geq (1-\delta)^jk_0\right].
\end{align*}
Finally, applying Borel-Cantelli to the events involved in the previous inequality, together with the bound in \eqref{eq:bd-main-thm}, and using that $k_0=b_n-|\mathcal{I}_{b_n}(o\mapsto n)|$ gives that for every $\epsilon>0$,
\begin{equation*}
\mathbb{P}\left[\mathcal{I}(b_n)\not\subset B_o(n(1+\epsilon))\quad i.o. \right]\leq \mathbb{P}\left[\dfrac{|\mathcal{I}_{b_n}(o\mapsto n)|}{b_n}<(1-c\epsilon^{\alpha})\quad i.o.\right].
\end{equation*}
By Proposition \ref{prop:LB}, the event on the right-hand side above can happen only finitely many times. Therefore for every $\epsilon>0$,
\begin{equation*}
\mathcal{I}(b_n)\subset B_o(n(1+\epsilon))\quad \text{ for all sufficiently large } n
\end{equation*}
with probability $1$. This concludes the proof of the outer bound in Theorem \ref{thm:idla-sierpinski-gasket}.
\end{proof}

\section*{Questions}


\paragraph{1. Rotor-router aggregation on Sierpinski gasket graphs.}
\emph{Rotor-router aggregation} is a deterministic version of IDLA, which describes the growth of a cluster of particles, where the particles perform deterministic walks (called \emph{rotor-router walks}) instead of random walks. In a rotor-router walk on a graph $G$, each vertex is equipped with an arrow (rotor) pointing to one of the neighbors. A particle performing a
rotor-router walk first changes the rotor at the current position to point to the next neighbor, in a fixed order of neighbors chosen at the beginning, and then the particle moves to the neighbor the rotor is pointing towards. In rotor-router aggregation, for a fixed initial configuration of rotors, we start $n$ particles at the origin of $o$, and let each of these particles perform rotor-router walk until reaching a site previously unvisited, where it stops. Then a new particle starts at the origin, without resetting the configuration of rotors. The resulting deterministic set $\mathcal{R}(n)$ of $n$ occupied sites is called the \emph{rotor-router cluster}. As in the case of IDLA, one of the questions here is to determine if the set $\mathcal{R}_n$ of occupied sites has a limiting shape regardless of the initial configuration of rotors. IDLA and rotor-router aggregation have similar behavior on several state spaces, as shown  in \cite{peres_levine_strong_spherical} on $\mathbb{Z}^d$, and in \cite{huss-sava-rr-comb,idla-comb-huss-sava} on comb lattices. On the Sierpinski gasket $\SG$,
it has been proven in \cite{chen-kudler-flam} that the rotor cluster has the same limit shape from Theorem \ref{thm:idla-sierpinski-gasket}. Even more, a fourth growth model, called \emph{abelian sandpile} has the same limit shape on $\SG$; see again \cite{chen-kudler-flam}.

\paragraph{2. IDLA on other fractal graphs.}
Another fractal graph with interesting properties is the graphical Sierpinski carpet $\SC$, where
it may be very interesting to investigate the IDLA process. Most of the computations in the current paper can be carried over to Sierpinski carpet graphs, with the exception of Lemma \ref{lem:mean-val-gr-fc} which is more delicate on $\SC$. The proof of Lemma \ref{lem:mean-val-gr-fc} uses the divisible sandpile model on $\SG$, whereon explicit computations can be carried out thanks to the finite ramification of $\SG$. In contrast, $\SC$ is infinitely ramified, and fine estimates of the solution to the corresponding Dirichlet problem are not known at the moment.
According to computer simulations, there does not seem to exist a unique scaling limit for the IDLA clusters. 
Actually, the simulations suggest that there is a whole family of scaling limits,
and that these scaling limits seem to have a fractal boundary.
Figure \ref{fig:idla_sc} shows IDLA clusters on the graphical Sierpinski carpet $\SC$ in dimension $2$,
for $10000$ up to $150000$ random walks starting at the origin.

\begin{figure}
\centering
\begin{tikzpicture}
\node (a1)          at (0,0) {\scalebox{1}[-1]{\includegraphics[width=3.1cm]{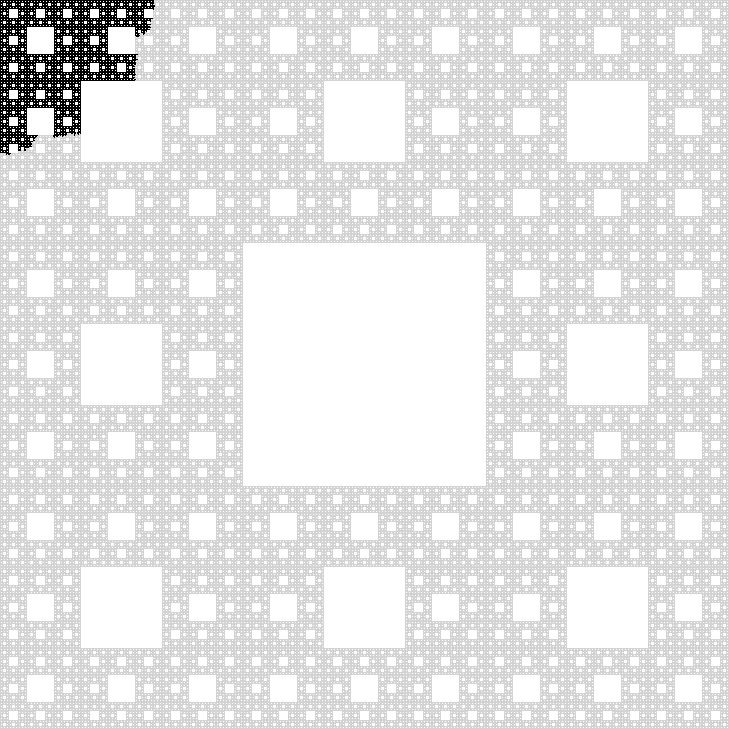}}};
\node (b1) [right=0.5cm of a1] {\scalebox{1}[-1]{\includegraphics[width=3.1cm]{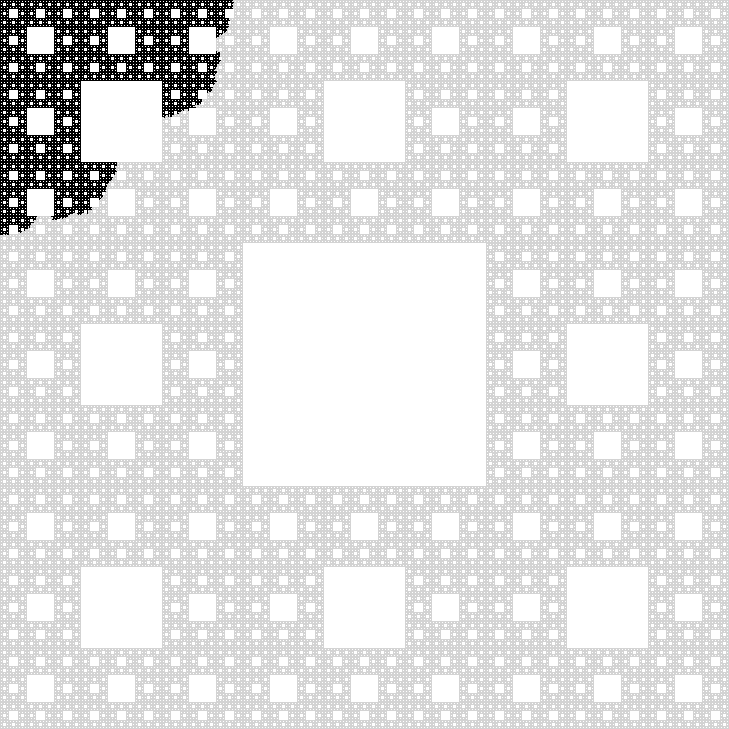}}};
\node (c1) [right=0.5cm of b1] {\scalebox{1}[-1]{\includegraphics[width=3.1cm]{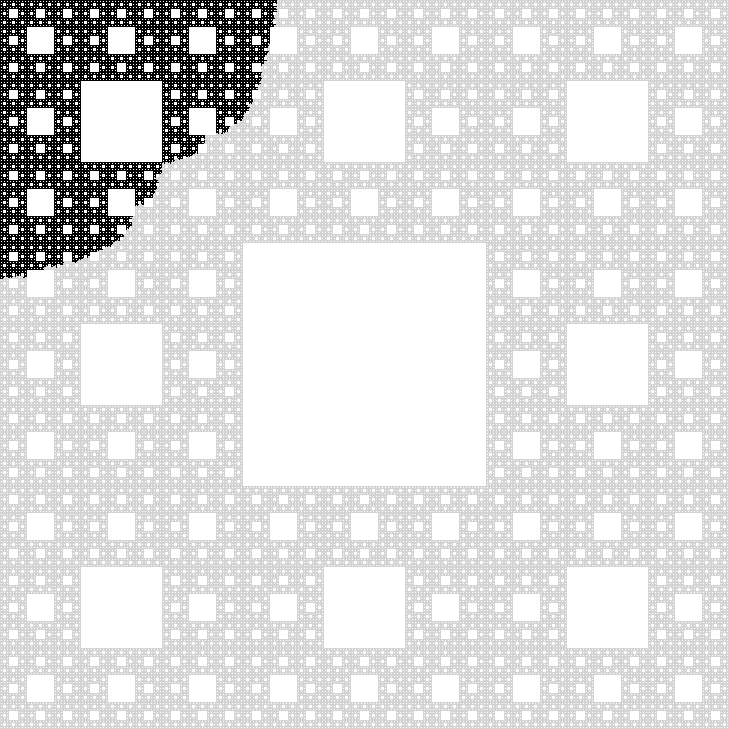}}};
\node (d1) [right=0.5cm of c1] {\scalebox{1}[-1]{\includegraphics[width=3.1cm]{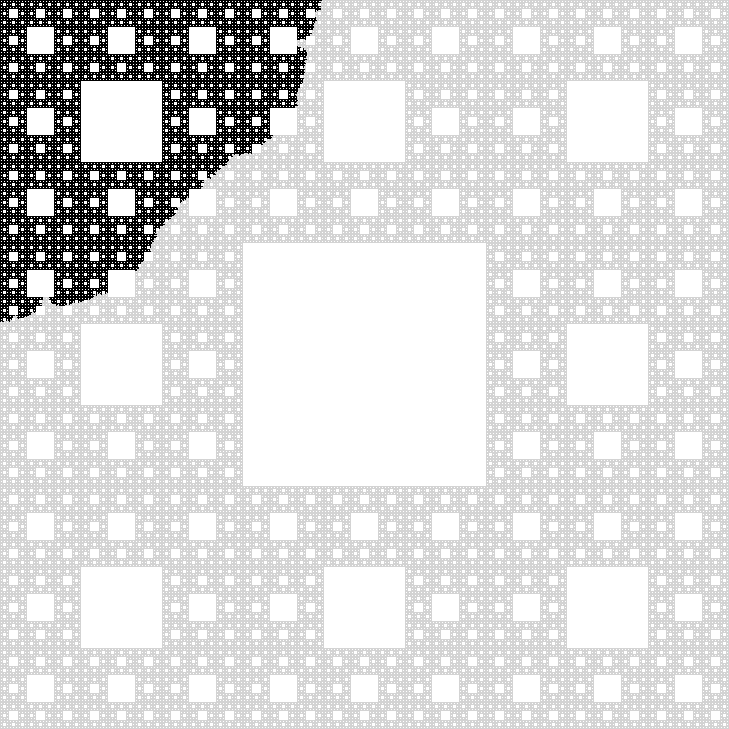}}};
\node (a2) [below=0.5cm of a1] {\scalebox{1}[-1]{\includegraphics[width=3.1cm]{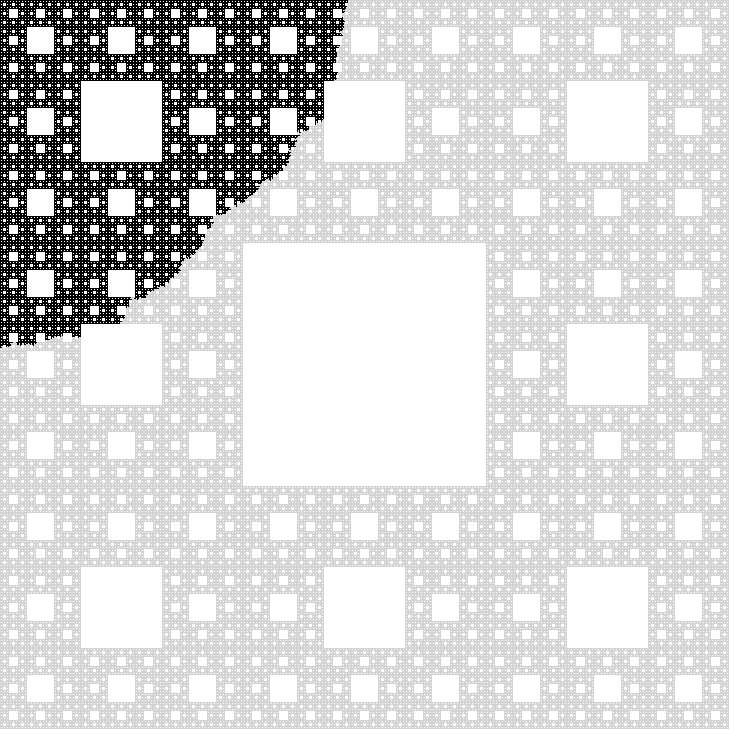}}};
\node (b2) [right=0.5cm of a2] {\scalebox{1}[-1]{\includegraphics[width=3.1cm]{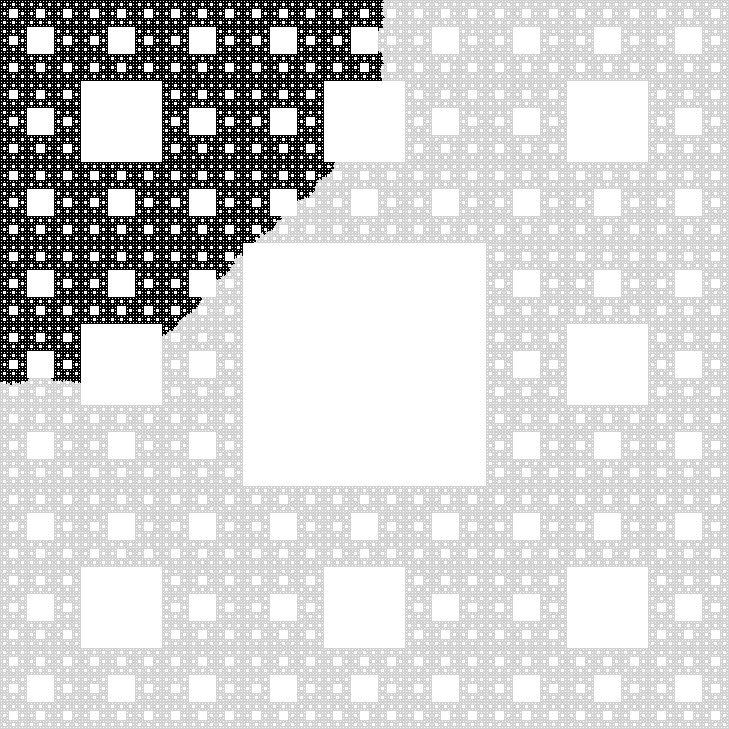}}};
\node (c2) [right=0.5cm of b2] {\scalebox{1}[-1]{\includegraphics[width=3.1cm]{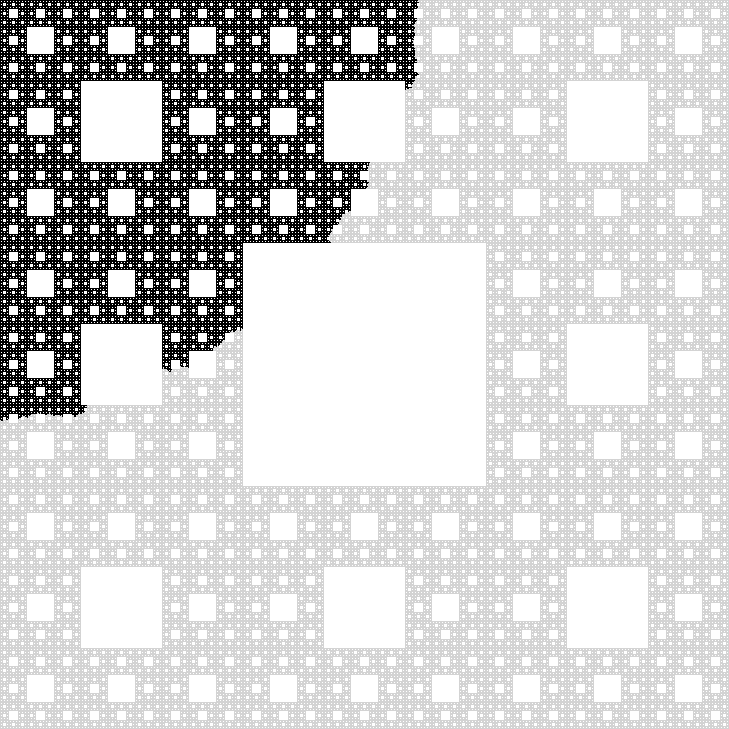}}};
\node (d2) [right=0.5cm of c2] {\scalebox{1}[-1]{\includegraphics[width=3.1cm]{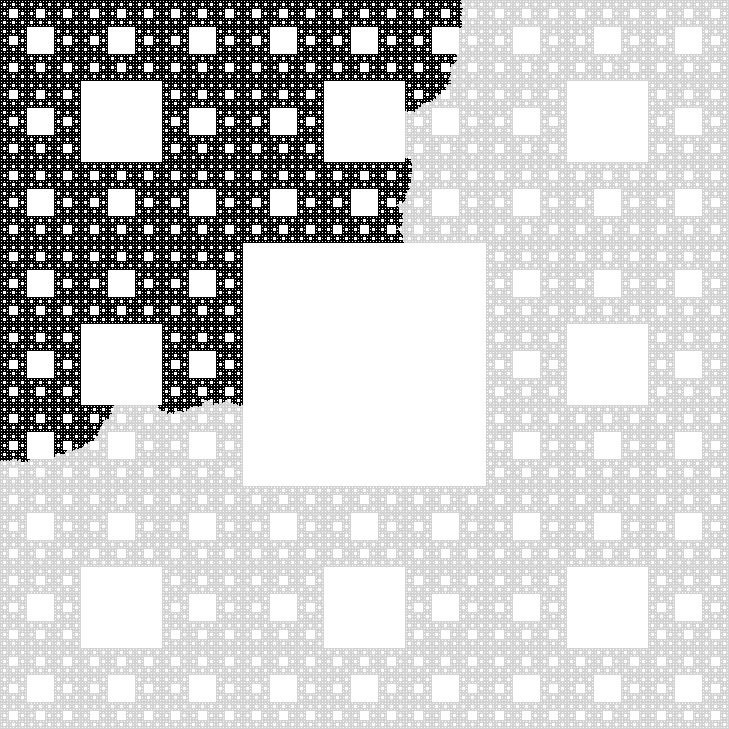}}};
\node (a3) [below=0.5cm of a2] {\scalebox{1}[-1]{\includegraphics[width=3.1cm]{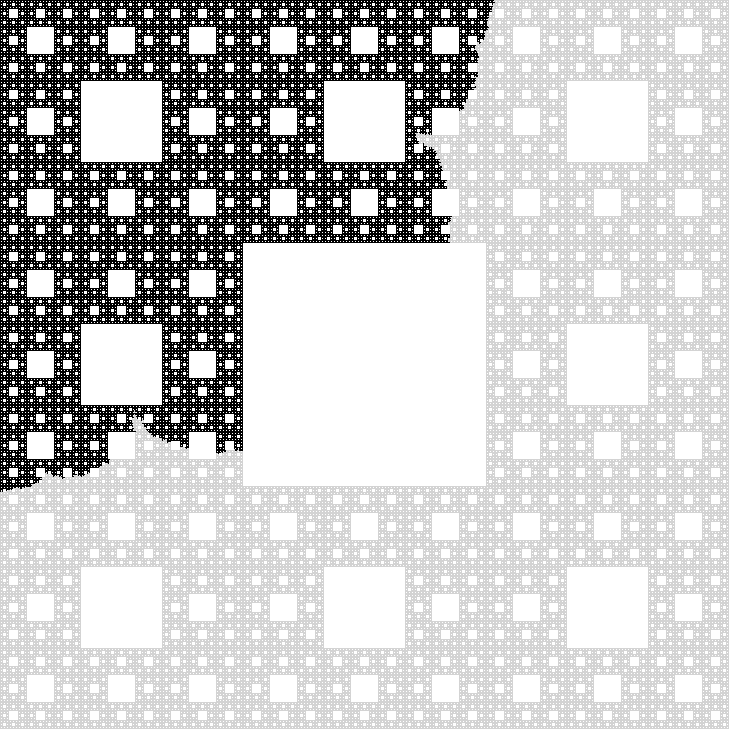}}};
\node (b3) [right=0.5cm of a3] {\scalebox{1}[-1]{\includegraphics[width=3.1cm]{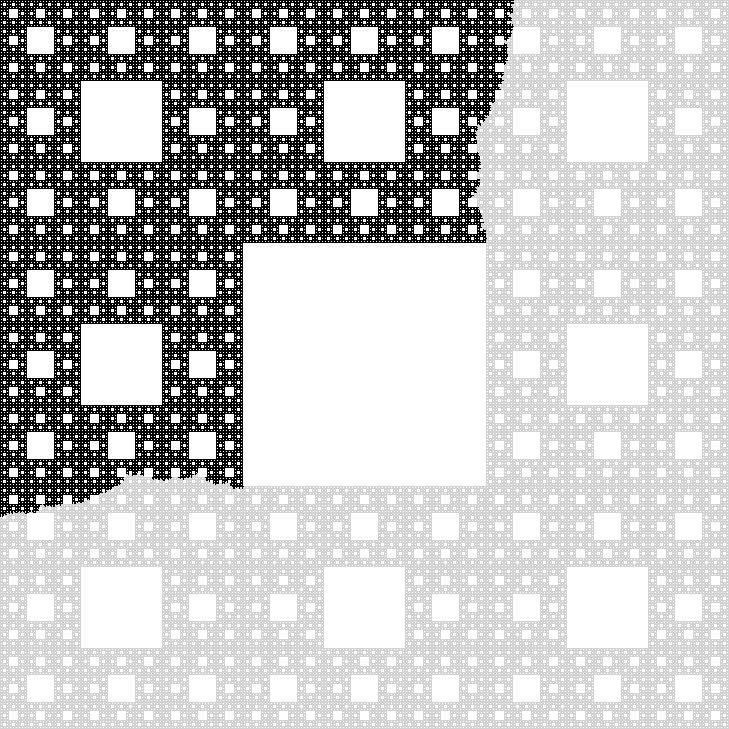}}};
\node (c3) [right=0.5cm of b3] {\scalebox{1}[-1]{\includegraphics[width=3.1cm]{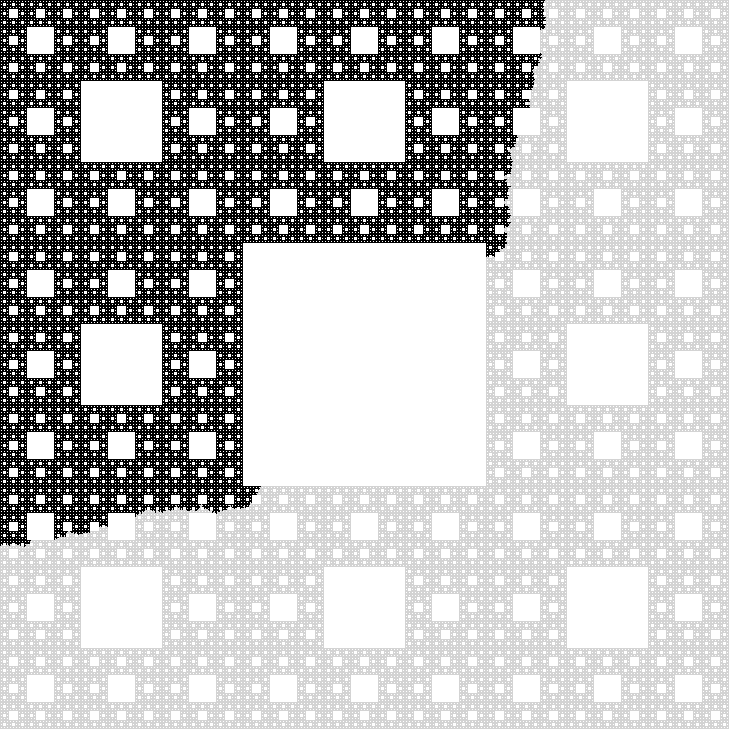}}};
\node (d3) [right=0.5cm of c3] {\scalebox{1}[-1]{\includegraphics[width=3.1cm]{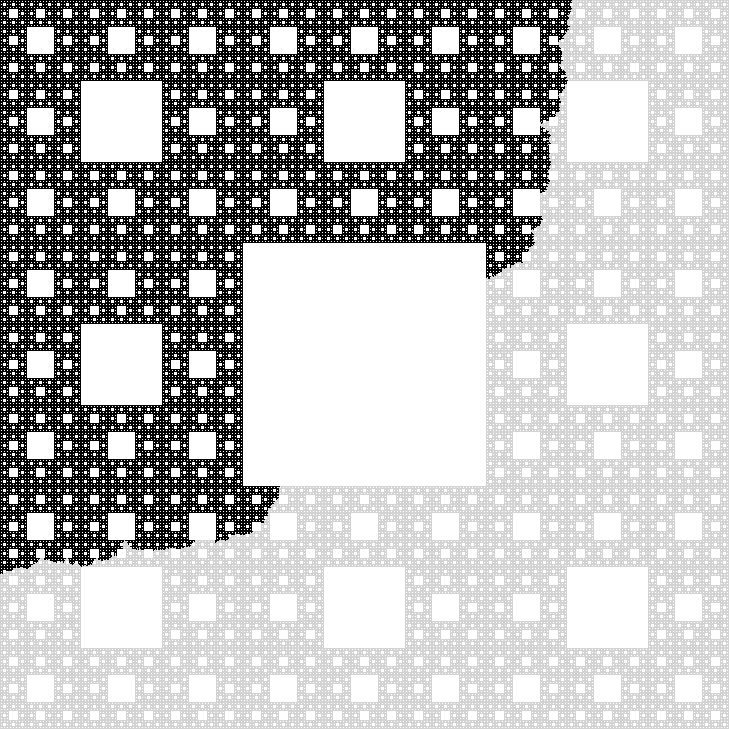}}};
\node (a4) [below=0.5cm of a3] {\scalebox{1}[-1]{\includegraphics[width=3.1cm]{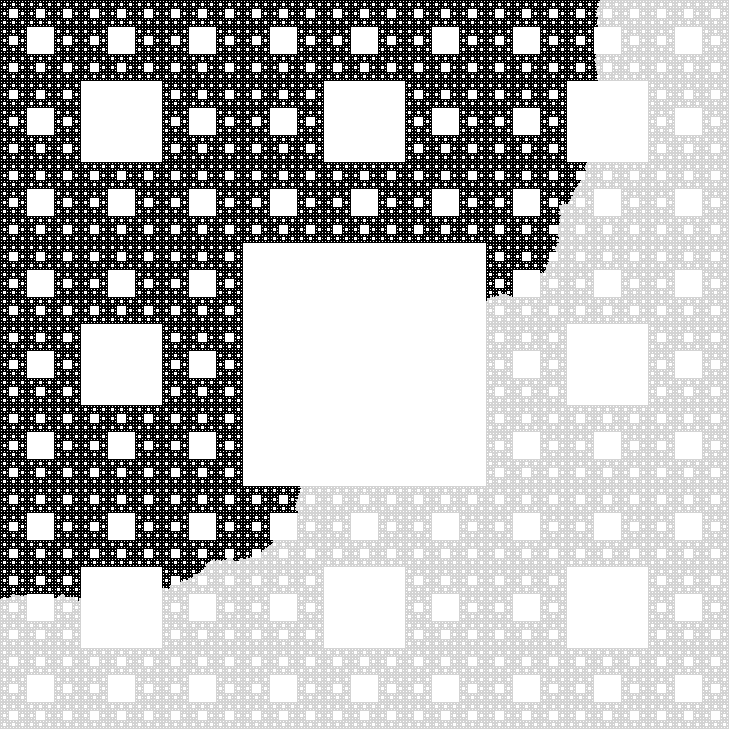}}};
\node (b4) [right=0.5cm of a4] {\scalebox{1}[-1]{\includegraphics[width=3.1cm]{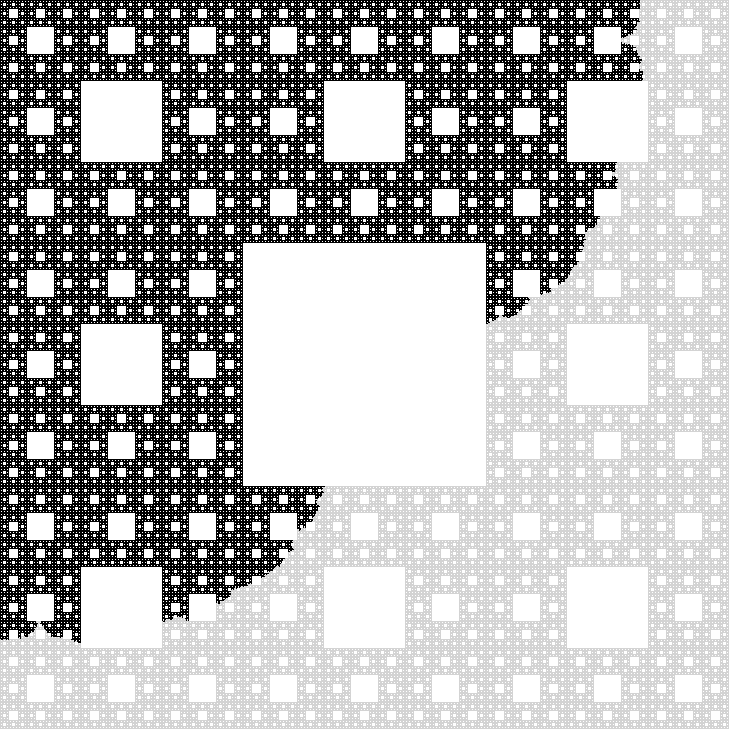}}};
\node (c4) [right=0.5cm of b4] {\scalebox{1}[-1]{\includegraphics[width=3.1cm]{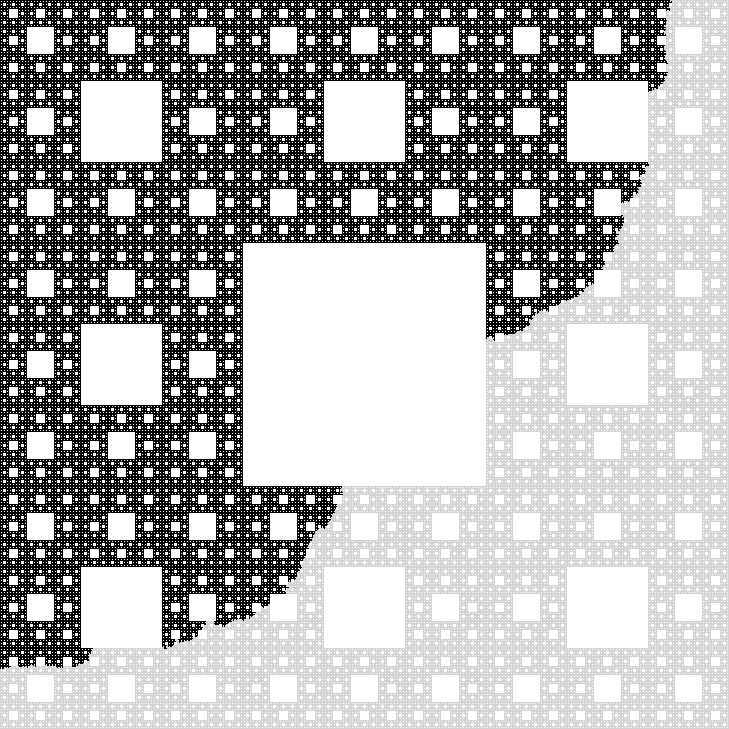}}};
\end{tikzpicture}
\caption{\label{fig:idla_sc}IDLA clusters on the Sierpinski carpet for $10000$ up to $150000$ particles.}
\end{figure}

\paragraph{Acknowledgements.}
We are grateful to Lionel Levine for providing useful comments on an earlier version of this paper.
The research of Joe P.\@ Chen was supported by the National Science Foundation (NSF) grant DMS-1262929, the Simons Foundation grant \#523544, and the Research Council of Colgate University.
The research of Wilfried Huss was supported by the Austrian Science Fund (FWF): 
\href{http://pf.fwf.ac.at/en/research-in-practice/in-the-spotlight-schroedinger/list-of-schroedinger-fellows/2014/8523}{J3628-N26} and P25510-N26.
The research of Ecaterina Sava-Huss was supported by the Austrian Science Fund (FWF):
\href{http://pf.fwf.ac.at/en/research-in-practice/in-the-spotlight-schroedinger/list-of-schroedinger-fellows/2014/210946}{J3575-N26}.
The research of Alexander Teplyaev was supported by the National Science Foundation (NSF) grants DMS-1262929 and DMS-1613025.

\pagestyle{empty}
\addcontentsline{toc}{section}{References}
\bibliography{idla-gasket}
\bibliographystyle{myhep}

\textsc{Joe P. Chen, Department of Mathematics, Colgate University, Hamilton, NY 13346, USA.}\\
\texttt{jpchen@colgate.edu} 

\textsc{Wilfried Huss, ADB Safegate Austria, 8020 Graz, Austria.}\\
\texttt{husswilfried@gmail.com} 

\textsc{Ecaterina Sava-Huss, Department of Mathematics, University of Innsbruck, 6020 Innsbruck, Austria.}
\texttt{Ecaterina.Sava-Huss@uibk.ac.at}

\textsc{Alexander Teplyaev, Department of Mathematics, University of Connecticut, Storrs, CT 06269, USA.}
\texttt{teplyaev@uconn.edu}
\end{document}